\documentclass{article}
%
%
\usepackage{amssymb,amsmath,bm,geometry,graphics,graphicx,color}
\usepackage{amsfonts}


\def\pdt2{\partial_t^2}
\def\pdx2{\partial_x^2}

\newcommand{\normmm}[1]{{\left\vert\kern-0.25ex\left\vert\kern-0.25ex\left\vert #1
    \right\vert\kern-0.25ex\right\vert\kern-0.25ex\right\vert}}

\newcommand{\abs}[1]{\left\vert#1\right\vert}

\def\RR{{\mathbb{R}}}
\def\CC{{\mathbb{C}}}

\newtheorem{theo}{Theorem}[section]

\newtheorem{rem}[theo]{Remark}
\newtheorem{defi}[theo]{Definition}

\newtheorem{assum}[theo]{Assumption}
\newtheorem{prop}[theo]{Proposition}

\def\no{\noindent}

\title{Exponential energy-preserving methods for charged-particle dynamics in a  strong and constant magnetic
field}

\author{Bin Wang\,\footnote{School of Mathematical Sciences, Qufu Normal
University, Qufu  273165,  P.R.China; Mathematisches Institut,
University of T\"{u}bingen, Auf der Morgenstelle 10, 72076
T\"{u}bingen, Germany. E-mail:~{\tt wang@na.uni-tuebingen.de}}   }

\begin{document}
\maketitle

\begin{abstract}
In this paper, exponential energy-preserving methods are formulated
and analysed for solving charged-particle dynamics  in a strong and
constant magnetic field.  The resulting method  can exactly preserve
the energy of the  dynamics.  Moreover, it is shown that the
magnetic moment of the considered system   is nearly conserved over
a long time along this exponential energy-preserving method, which
is proved by using modulated Fourier expansions. Other properties of
the method  including symmetry  and convergence are also studied. An
illustrated  numerical experiment is carried out to demonstrate  the
long-time behaviour of the method.
\medskip

\no{Keywords:} charged-particle dynamics, exponential
energy-preserving methods, modulated Fourier expansions, long-time
conservation,  strong and constant magnetic field

\medskip
\no{MSC:} 65L06, 65P10, 78A35, 78M25.

\end{abstract}

\section{Introduction}\label{intro}
In this paper, we  derive and analyse   energy-preserving methods
for charged-particle dynamics in a strong and constant magnetic
field
 \begin{equation}\label{charged-particle sts-cons}
\begin{array}[c]{ll}
\ddot{x}=\dot{x} \times \frac{1}{\epsilon} B +F(x), \quad
x(t_0)=x_0,\quad \dot{x}(t_0)=\dot{x}_0,\ \ t\in[t_0,T],
\end{array}
\end{equation}
where  $x(t)\in \RR^3$ describes the position of a particle,   $B =
\nabla_x \times A(x)$ is a constant magnetic field with the vector
potential $A(x) = -\frac{1}{2}x \times B\in \RR^3$ and  $F(x) =
-\nabla_x U(x)$ is an electric field with the scalar potential
$U(x)$.  Following \cite{Hairer2018}, we are devoted to   the
situation of a small positive acaling parameter $0<\epsilon\ll 1$
and assume  that $\abs{B}\geq 1$ in the Euclidean norm.   The energy
of this dynamics is given by
\begin{equation}\label{energy of cha}
E(x,v)=\frac{1}{2}\abs{v}^2+U(x),
\end{equation}
where $v=\dot{x}$ is the velocity of the particle.
 Denote the constant
vector $B$ by
 $B=(B_1,B_2,B_3)^{\intercal}$ with $B_i \in \RR$ for $i=1,2,3.$ By
 the definition of the cross product, we obtain
$ \dot{x} \times  B =  \tilde{B} \dot{x}, $ where $\tilde{B}$ is a
skew symmetric matrix
$$\tilde{B}=\left(
                   \begin{array}{ccc}
                     0 & B_3 & -B_2 \\
                     -B_3 & 0 & B_1 \\
                     B_2 & -B_1 & 0 \\
                   \end{array}
                 \right).
$$
The system \eqref{charged-particle sts-cons} as well as $v=\dot{x}$
can be rewritten as
\begin{equation}\label{charged-sts-first order}
\begin{array}[c]{ll}
\dot{x}=v,\ \dot{v}=\frac{1}{\epsilon} \tilde{B}v+F(x).
\end{array}
\end{equation}
With the analysis given in \cite{Arnold97,Cary2009}, it is well
known that  the magnetic moment
\begin{equation}\label{momentum for B}
I(x,v)=\frac{\abs{v_{\perp}}^2}{2\abs{B  }}=\frac{1}{2 \abs{
\tilde{B}}^3} \abs{\tilde{B}v}^2
\end{equation}
 is an adiabatic invariant,
where  $v_{\perp}=\frac{v \times B }{\abs{B  }}$ is orthogonal to $B
$ and $|\tilde{B}|=\sqrt{B_1^2+B_2^2+B_3^2}.$ Recently, it has been
shown in \cite{Hairer2018} that this quantity is nearly conserved
over long time scales, which is proved by using  the technique of
modulated Fourier expansion with state-dependent frequencies and
eigenvectors.

Charged-particle dynamics have been received much attention for a
long time (see, e.g. \cite{Arnold97,Cary2009}) and
 many effective methods have been developed for solving this system.   The Boris method was presented in
\cite{Boris1970} and it was   researched further in
\cite{Ellison2015,Hairer2017-1,Qin2013}. Various other kinds of
methods have also been researched for  charged-particle dynamics,
such as volume-preserving algorithms   in \cite{He2015}, symplectic
or K-symplectic algorithms  in
\cite{He2017,Tao2016,Webb2014,Zhang2016}, and symmetric multistep
methods  in \cite{Hairer2017-2}.  It is worth mentioning that, more
recently, a variational integrator has been  studied in
\cite{Hairer2018} for solving charged-particle dynamics in a strong
magnetic field. In this paper we are interested in the formulation
and analysis of energy-preserving (EP) methods when applied to
charged-particle dynamics in a strong and constant magnetic field.

 For the energy-preserving
methods, there have been a lot of studies on  this topic. Many
different   EP methods have been presented and analysed, such as the
average vector field (AVF) method (see, e.g.
   \cite{Celledoni2010,Quispel08}), discrete gradient
methods  (see, e.g.  \cite{McLachlan14}),  Hamiltonian Boundary
Value Methods   (see, e.g. \cite{Brugnano2010}),   EP collocation
methods  (see, e.g. \cite{Hairer2010})
 and trigonometric/exponential EP
   methods  (see, e.g. \cite{Li_Wu(sci2016),wang2012-1,wubook2018}). Based on these work, we will derive   and analyse novel EP
integrators for charged-particle dynamics in a strong and constant
magnetic field. The  long time  magnetic moment conservation of the
new integrator will also be researched via its modulated Fourier
expansion. The technique of   modulated Fourier expansion  was
firstly given in \cite{Hairer00} and then it has been successfully
used in the study of long-time behaviour for
  numerical methods/differential equations (see, e.g.
\cite{Cohen15,Cohen08-1,Gauckler13,Hairer16,hairer2006,Sanz-Serna09}).

The rest of this paper is organized as follows. In Section
\ref{sec:Formulation}, we first formulate the scheme of the  method
and then prove that it is symmetric. In Section \ref{sec:num exp},
two main results concerning energy preservation and magnetic moment
preservation  are presented and a numerical experiment is reported
to support these results. The proofs of the two main results are
given in Sections \ref{sec:proof1}-\ref{sec:proof2}, respectively.
The concluding remarks of this paper are given in the last section.

\section{Formulation of the method} \label{sec:Formulation}In order to
effectively solve    the system \eqref{charged-sts-first order},
 we consider
the following method.
\begin{defi}
\label{scheme 2}  The exponential energy-preserving  method for
solving \eqref{charged-sts-first order} is defined by:
\begin{equation}\label{EAVF}
\left\{\begin{array}[c]{ll}x_{n+1}=x_{n}+
h\varphi_1(\frac{h}{\epsilon} \tilde{B}) v_{n}+h^2
\varphi_2(\frac{h}{\epsilon} \tilde{B})  \int_{0}^1
F\big(x_{n}+\sigma(x_{n+1}-x_{n}) \big) d\sigma,\\
v_{n+1}=e^{\frac{h}{\epsilon} \tilde{B}}v_{n}+h \varphi_{1}(
\frac{h}{\epsilon} \tilde{B}) \int_{0}^1
F\big(x_{n}+\sigma(x_{n+1}-x_{n}) \big) d\sigma,
\end{array}\right.
\end{equation}
where $h$ is a stepsize and the $\varphi$-functions are defined by
\begin{equation}
 \varphi_0(z)=e^{z},\ \ \varphi_k(z)=\int_{0}^1
e^{(1-\sigma)z}\frac{\sigma^{k-1}}{(k-1)!}d\sigma, \ \ k=1,2.
\label{phi}%
\end{equation}   We denote this method by  EEP.
\end{defi}

\begin{rem}It is noted that this kind of method belongs to  exponential integrators, which have been widely   developed
and researched for solving highly oscillatory systems (see, e.g.
 \cite{Hochbruck2010,Hochbruck2009,Li_Wu(sci2016),Mei2017,wang-2016,wu2017-JCAM}).
\end{rem}

\begin{prop}\label{symmetric thm}
The EEP integrator \eqref{EAVF}  is symmetric.
\end{prop}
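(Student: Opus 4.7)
The plan is to verify symmetry by the standard recipe: swap $(x_n,v_n)\leftrightarrow(x_{n+1},v_{n+1})$ and $h\to -h$ in the scheme \eqref{EAVF}, and show that the resulting system is equivalent to the original one. The first step is to observe that the integral $\int_0^1 F\bigl(x_n+\sigma(x_{n+1}-x_n)\bigr)d\sigma$ is invariant under this exchange: replacing $x_n\leftrightarrow x_{n+1}$ produces $\int_0^1 F\bigl(x_{n+1}+\sigma(x_n-x_{n+1})\bigr)d\sigma$, and the change of variable $\tau=1-\sigma$ returns the original expression. Denote this common integral by $\bar F$.

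After the swap, the velocity update becomes $v_n=e^{-(h/\epsilon)\tilde B}v_{n+1}-h\,\varphi_1(-(h/\epsilon)\tilde B)\bar F$, which I would solve for $v_{n+1}$ to obtain $v_{n+1}=e^{(h/\epsilon)\tilde B}v_n+h\,e^{(h/\epsilon)\tilde B}\varphi_1(-(h/\epsilon)\tilde B)\bar F$. Matching this against the original velocity equation reduces to the scalar identity
\begin{equation*}
e^{z}\varphi_1(-z)=\varphi_1(z),
\end{equation*}
which follows directly from the definition $\varphi_1(z)=(e^z-1)/z$ and extends to the matrix argument $z=(h/\epsilon)\tilde B$ by functional calculus.

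For the position update, I would substitute the expression just obtained for $v_{n+1}$ into the swapped $x$-equation and simplify using $e^z\varphi_1(-z)=\varphi_1(z)$; after cancelling the $h\varphi_1((h/\epsilon)\tilde B)v_n$ term, matching reduces to proving
\begin{equation*}
\varphi_1(z)\varphi_1(-z)-\varphi_2(-z)=\varphi_2(z),
\end{equation*}
equivalently $\varphi_1(z)\varphi_1(-z)=\varphi_2(z)+\varphi_2(-z)$. I would verify this by direct computation from \eqref{phi}: the left side equals $(e^z-1)(1-e^{-z})/z^2=(e^z+e^{-z}-2)/z^2$, and the right side simplifies to the same quantity since the linear terms $\pm z$ in the numerators of $\varphi_2(\pm z)$ cancel.

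The main obstacle is really just bookkeeping: making sure the two $\varphi$-identities are stated and applied in the correct matrix form (they are valid for any square matrix, because $\tilde B$ commutes with every analytic function of $\tilde B$) and that the order of scalar and matrix factors is preserved. Once the two identities $e^z\varphi_1(-z)=\varphi_1(z)$ and $\varphi_1(z)\varphi_1(-z)=\varphi_2(z)+\varphi_2(-z)$ are in hand, symmetry of \eqref{EAVF} follows immediately.
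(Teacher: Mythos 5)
Your proposal is correct and follows essentially the same route as the paper: swap $(x_n,v_n)\leftrightarrow(x_{n+1},v_{n+1})$ with $h\to-h$, use the substitution $\tau=1-\sigma$ to keep the averaged force invariant, solve the velocity relation via $e^{z}\varphi_1(-z)=\varphi_1(z)$, and close the position relation via $\varphi_1(z)\varphi_1(-z)-\varphi_2(-z)=\varphi_2(z)$. The only difference is cosmetic: you verify the two $\varphi$-identities explicitly from $\varphi_1(z)=(e^z-1)/z$ and $\varphi_2(z)=(e^z-1-z)/z^2$, whereas the paper invokes them directly from the definition \eqref{phi}.
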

\begin{proof}Exchanging $(x_{n},v_{n})\leftrightarrow (x_{n+1},v_{n+1})$ and $
h\leftrightarrow -h$ in \eqref{EAVF}  yields
\begin{equation}
 \left\{\begin{array}[c]{ll}x_{n}=x_{n+1}-
h\varphi_1(-\frac{h}{\epsilon} \tilde{B}) v_{n+1}+h^2
\varphi_2(-\frac{h}{\epsilon} \tilde{B}) \int_{0}^1
F\big(x_{n}+\sigma(x_{n+1}-x_{n}) \big)
d\sigma,\\
v_{n}=e^{-\frac{h}{\epsilon} \tilde{B}}v_{n+1}-h \varphi_{1}(-
\frac{h}{\epsilon} \tilde{B}) \int_{0}^1
F\big(x_{n}+\sigma(x_{n+1}-x_{n}) \big) d\sigma,
\end{array}\right.\label{formula 1}%
\end{equation}
where the following fact has been used $$\int_{0}^1
F\big(x_{n+1}+\sigma(x_{n}-x_{n+1}) \big) d\sigma=\int_{0}^1
F\big(x_{n}+\sigma(x_{n+1}-x_{n}) \big) d\sigma.$$   We thus obtain
$$v_{n+1}=e^{\frac{h}{\epsilon} \tilde{B}}v_{n}+he^{\frac{h}{\epsilon} \tilde{B}} \varphi_{1}(-
\frac{h}{\epsilon} \tilde{B})\mathcal{I}=e^{\frac{h}{\epsilon}
\tilde{B}}v_{n}+h\varphi_{1}( \frac{h}{\epsilon}
\tilde{B})\mathcal{I}$$ by considering   the second formula of
\eqref{formula 1}. Inserting this into the first formula of
\eqref{formula 1}, we get
\begin{equation*}
\begin{array}{rl}
x_{n+1}&=x_{n}+h\varphi_1(-\frac{h}{\epsilon} \tilde{B}) v_{n+1}-h^2
\varphi_2(-\frac{h}{\epsilon} \tilde{B})\mathcal{I}\\
&=x_{n}+h\varphi_1(-\frac{h}{\epsilon} \tilde{B})
e^{\frac{h}{\epsilon} \tilde{B}}v_{n}+h^2\big(\varphi_{1}(-
\frac{h}{\epsilon} \tilde{B})\varphi_{1}( \frac{h}{\epsilon}
\tilde{B})-\varphi_2(-\frac{h}{\epsilon}
\tilde{B})\big)\mathcal{I}\\
&=x_{n}+h\varphi_1(\frac{h}{\epsilon} \tilde{B})
 v_{n}+h^2 \varphi_2(\frac{h}{\epsilon} \tilde{B}) \mathcal{I}.
\end{array}
\end{equation*}
 Therefore, the energy-preserving  integrator \eqref{EAVF} is
 symmetric.
\end{proof}

\section{Main results and numerical experiment}\label{sec:num exp}
\subsection{Main results}
 Before presenting
the main results of this paper, we  need the following assumptions,
which have been considered in \cite{Hairer00,hairer2006}.
\begin{assum}\label{ass}
\begin{itemize}
\item    We consider the initial values $$x_0=\mathcal{O}(1),\ \ \ v_0=\mathcal{O}(1)$$  such that the
energy $E$ is bounded independently of $\epsilon$ along the
solution.

\item It is  assumed that the numerical solution   stays in a compact set.

\item We require a lower bound on the  stepsize $h\tilde{\omega} \geq c_0 >
0$ with $\tilde{\omega}=\frac{|\tilde{B}|}{\epsilon}$.

\item The  following numerical non-resonance condition is assumed to
be true
\begin{equation}
|\sin(\frac{1}{2}kh\tilde{\omega})| \geq c \sqrt{h}\ \ \mathrm{for} \ \ k=1,2,\ldots,N\ \   \mathrm{with} \ \ N\geq2,\label{numerical non-resonance cond}%
\end{equation}
  which imposes a restriction on $N$ for a given $h$ and $\tilde{\omega}$.
\end{itemize}
\end{assum}

We first give the result about the energy preservation of the  EEP
method.
\begin{theo}\label{energy pre thm} (\textbf{Energy preservation.})
  The EEP integrator \eqref{EAVF} preserves the
energy $E$ in \eqref{energy of cha} exactly, i.e.,
$$E(x_{n+1},v_{n+1})=E(x_{n},v_{n})\qquad \textmd{for} \qquad n=0,1,\ldots.$$
\end{theo}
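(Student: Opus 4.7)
The plan is to verify $E(x_{n+1},v_{n+1})-E(x_n,v_n)=0$ by a direct algebraic computation using the structure of the scheme. To streamline, I would set $\Omega=\frac{h}{\epsilon}\tilde B$ and $\bar F=\int_0^1 F\bigl(x_n+\sigma(x_{n+1}-x_n)\bigr)d\sigma$, so that the update rules in \eqref{EAVF} read
\begin{equation*}
x_{n+1}-x_n=h\varphi_1(\Omega)v_n+h^2\varphi_2(\Omega)\bar F,\qquad v_{n+1}=e^{\Omega}v_n+h\varphi_1(\Omega)\bar F.
\end{equation*}

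For the potential part, since $F=-\nabla U$, the fundamental theorem of calculus along the straight segment from $x_n$ to $x_{n+1}$ gives
\begin{equation*}
U(x_{n+1})-U(x_n)=-\bar F^{\intercal}(x_{n+1}-x_n)=-h\bar F^{\intercal}\varphi_1(\Omega)v_n-h^2\bar F^{\intercal}\varphi_2(\Omega)\bar F.
\end{equation*}
For the kinetic part, I would expand $\tfrac12|v_{n+1}|^2$ using skew-symmetry of $\tilde B$: since $\Omega^{\intercal}=-\Omega$, the matrix $e^{\Omega}$ is orthogonal (so $v_n^{\intercal}(e^{\Omega})^{\intercal}e^{\Omega}v_n=|v_n|^2$) and $\varphi_k(\Omega)^{\intercal}=\varphi_k(-\Omega)$. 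Expanding therefore yields
\begin{equation*}
\tfrac12|v_{n+1}|^2-\tfrac12|v_n|^2=h\bar F^{\intercal}\varphi_1(-\Omega)e^{\Omega}v_n+\tfrac{h^2}{2}\bar F^{\intercal}\varphi_1(-\Omega)\varphi_1(\Omega)\bar F.
\end{equation*}

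Adding the two contributions, the result reduces to verifying two algebraic identities on $\varphi$-functions, which I would derive from the integral representation \eqref{phi} (or equivalently from the scalar formulas $\varphi_1(z)=(e^z-1)/z$, $\varphi_2(z)=(e^z-1-z)/z^2$):
\begin{equation*}
\varphi_1(-\Omega)e^{\Omega}=\varphi_1(\Omega),\qquad \varphi_1(\Omega)\varphi_1(-\Omega)=\varphi_2(\Omega)+\varphi_2(-\Omega).
\end{equation*}
The first kills the cross terms against $v_n$. The second, combined with the observation that $\bar F^{\intercal}\varphi_2(\Omega)\bar F$ is a scalar and equals its own transpose $\bar F^{\intercal}\varphi_2(-\Omega)\bar F$, shows that the $\bar F^{\intercal}(\cdot)\bar F$ terms also cancel exactly.

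The bulk of the argument is bookkeeping; the only substantive step is checking the two $\varphi$-function identities. I expect the second of these to be the main technical obstacle, but it is a one-line computation once one substitutes the closed forms for $\varphi_1$ and $\varphi_2$ (or uses the change of variable $\tau=1-\sigma$ in the defining integrals and a Fubini-type manipulation on the product of integrals).
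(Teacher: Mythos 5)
Your proof is correct, and its skeleton is the same as the paper's: the same expansion of $\tfrac12|v_{n+1}|^2$ using skew-symmetry of $\tilde B$, the same use of the fundamental theorem of calculus to write $U(x_{n+1})-U(x_n)=-\bar F^{\intercal}(x_{n+1}-x_n)$, and the same identity $\varphi_1(-\Omega)e^{\Omega}=\varphi_1(\Omega)$ to cancel the terms linear in $v_n$ (in your notation $\Omega=\tfrac{h}{\epsilon}\tilde B$ and $\bar F$ is the paper's $\mathcal{I}$). The one place you genuinely diverge is the residual quadratic term. The paper keeps the combination $\varphi_1(-\Omega)\varphi_1(\Omega)-2\varphi_2(\Omega)$ and argues that its power series contains only odd powers of $\Omega$, hence the matrix is skew-symmetric and the form $\bar F^{\intercal}(\cdot)\bar F$ vanishes; you instead invoke the exact functional identity $\varphi_1(\Omega)\varphi_1(-\Omega)=\varphi_2(\Omega)+\varphi_2(-\Omega)$ and then transpose the scalar $\bar F^{\intercal}\varphi_2(-\Omega)\bar F$ into $\bar F^{\intercal}\varphi_2(\Omega)\bar F$. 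These two arguments are equivalent --- your identity says precisely that the even part of the paper's matrix vanishes, which is exactly the content of its ``only odd powers'' claim --- but yours is the more explicit version: it is verified in one line, $\varphi_1(z)\varphi_1(-z)=(e^{z}+e^{-z}-2)/z^{2}=\varphi_2(z)+\varphi_2(-z)$, whereas the paper asserts the odd-power expansion with unspecified coefficients $c_k$ and never computes them. One small point of care on your side: $\tilde B$ is singular (every $3\times 3$ skew-symmetric matrix has $0$ as an eigenvalue), so the closed forms with $z$ in the denominator cannot be applied literally to the matrix argument; since both sides are entire functions, the scalar identity transfers to matrices via the power series or, as you yourself note, via the integral representation \eqref{phi}, so this is a cosmetic issue rather than a gap.
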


Now, we  present  the   result about the long time magnetic moment
conservation of the  EEP method.
\begin{theo} \label{2 sym Long-time
thm} (\textbf{Magnetic moment conservation.}) Under the conditions
of Assumption \ref{ass}, the long time magnetic moment is nearly
conserved over long times by the  EEP method
  \begin{equation*}
\begin{aligned}
I(x_n,v_n)=I(x_0,v_0)+\mathcal{O}\Big(\frac{1}{\abs{\cos(\frac{1}{2}
h\tilde{\omega})}}h\Big)+\mathcal{O}(h),
\end{aligned}
\end{equation*}
where $0\leq nh\leq h^{-N+1}.$ The constants symbolized by
$\mathcal{O}$ depend on $N$,   the final time $T$ and the constants
in the assumptions, but are independent of $n,\ h,\ \epsilon$.
\end{theo}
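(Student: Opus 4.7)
The plan is to establish Theorem~\ref{2 sym Long-time thm} via modulated Fourier expansion (MFE), following the strategy of \cite{Hairer2018,Hairer00,hairer2006}. Because $B$ is constant here, the eigenstructure of $\tilde B$ is $\epsilon$-independent: the three eigenvectors of $\tilde B$ associated with the eigenvalues $0,\pm\ii|\tilde B|$ give a fixed orthogonal decomposition along which the fast rotation acts diagonally, so the state-dependent machinery of \cite{Hairer2018} simplifies considerably. I would begin by making, with $\tilde\omega=|\tilde B|/\epsilon$, the ansatz
\begin{equation*}
x_n=\sum_{|k|\leq N}e^{\ii k t_n\tilde\omega}\, z^k(t_n)+R^x_n,\qquad v_n=\sum_{|k|\leq N}e^{\ii k t_n\tilde\omega}\, w^k(t_n)+R^v_n,
\end{equation*}
where the modulation functions $z^k,w^k$ and their time derivatives are to be bounded uniformly in $\epsilon$, and the remainders are driven down to $\mathcal{O}(h^{N+1})$ on intervals of length $\mathcal{O}(1)$.

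Substituting this ansatz into both lines of \eqref{EAVF}, Taylor-expanding $F$ around the slow component $z^0$, and matching coefficients of $e^{\ii k t\tilde\omega}$ on the two sides yields a coupled system of modulation equations for $(z^k,w^k)$. The operators $e^{\frac{h}{\epsilon}\tilde B}$ and $\varphi_j(\frac{h}{\epsilon}\tilde B)$ diagonalise in the eigenbasis of $\tilde B$, so each mode equation becomes a scalar finite-difference identity with coefficients of the form $\varphi_j(\ii k h\tilde\omega)$. The numerical non-resonance assumption \eqref{numerical non-resonance cond} is exactly what is needed to invert these symbols, and standard iterative bounds then deliver size estimates $z^k,w^k=\mathcal{O}(h^{|k|/2})$ for $k\neq 0$. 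Combining the first and second equations of \eqref{EAVF} to eliminate the common integral $\int_0^1 F(\cdot)\,d\sigma$ introduces the denominator $\cos(\tfrac12 h\tilde\omega)$ in the coupling between the $\pm1$-modes, and this is what ultimately produces the factor $1/|\cos(\tfrac12 h\tilde\omega)|$ in the final bound.

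The heart of the argument is to construct an almost-invariant of the modulation system that agrees with the magnetic moment. Because the original dynamics is invariant under the one-parameter group of rotations generated by $\tilde B$, the modulation functions inherit the $S^1$-symmetry $(z^k,w^k)\mapsto(e^{\ii k\theta}z^k,e^{\ii k\theta}w^k)$. Writing the modulation equations in an extended Lagrangian form and applying the Noether theorem to this action produces a quantity $\mathcal{J}[z,w]$ that is conserved along the modulation flow up to an $\mathcal{O}(h^{N+1})$ defect per unit time, with leading term $|\tilde B w^0_\perp|^2/(2|\tilde B|^3)$ and corrections governed by the $k=\pm1$ modes. Comparing $\mathcal{J}$ with $I(x_n,v_n)$ through the MFE then gives
\begin{equation*}
\mathcal{J}[z(t_n),w(t_n)]=I(x_n,v_n)+\mathcal{O}\!\Big(\tfrac{h}{|\cos(\tfrac12 h\tilde\omega)|}\Big)+\mathcal{O}(h).
\end{equation*}

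Patching the MFE on successive intervals of length $\mathcal{O}(1)$, the drift of $\mathcal{J}$ accumulates to $\mathcal{O}(nh^{N+1})=\mathcal{O}(h)$ on the time scale $nh\leq h^{-N+1}$, which combined with the previous conversion between $\mathcal{J}$ and $I$ yields the stated estimate. The main obstacle I anticipate is threefold: (i) propagating the bounds on $z^k,w^k$ uniformly in $\epsilon$ in the presence of the kernel of $\tilde B$ in the direction of $B$, which decouples into an averaged ODE; (ii) identifying a \emph{discrete} Noether invariant whose quadratic expansion reproduces exactly $I$ rather than some other rotationally invariant form; and (iii) carrying out the symbol calculus with $\varphi_0,\varphi_1,\varphi_2$ so as to show that the denominator that survives the inversion is precisely $\cos(\tfrac12 h\tilde\omega)$, with the correct leading constants. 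None of these steps is conceptually new compared with \cite{Hairer2018}, but each requires substantial bookkeeping specific to the exponential structure of \eqref{EAVF}.
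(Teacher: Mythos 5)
Your overall strategy does coincide with the paper's: diagonalize $\tilde B$ (eigenvalues $0,\pm\ii|\tilde B|$), build a modulated Fourier expansion with frequency $\tilde\omega=|\tilde B|/\epsilon$, obtain an almost-invariant of the modulation system from the action $q^k\mapsto e^{\ii k\lambda\tilde\omega}q^k$ by a Noether-type total-derivative argument, and patch intervals of length $\mathcal{O}(1)$ up to $nh\le h^{-N+1}$. However, your quantitative picture of the expansion is inverted exactly where the theorem lives. In this problem the gyration --- the $\mathcal{O}(1)$ perpendicular velocity whose squared modulus \emph{is} the magnetic moment --- is carried by the $k=\pm1$ modes of the velocity expansion: in the paper's notation $\tilde\eta^1_1=\ii\tilde\omega\,\tilde\zeta^1_1+\mathcal{O}(\epsilon)=\mathcal{O}(1)$ (since $\tilde\zeta^1_1=\mathcal{O}(\epsilon)$ and $\tilde\omega=|\tilde B|/\epsilon$), while the perpendicular components of the zeroth mode are only the drift, $\tilde\eta^0_{\pm1}=\mathcal{O}(\epsilon)$. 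Your claimed bounds $w^k=\mathcal{O}(h^{|k|/2})$ for $k\neq0$ would give $w^{\pm1}=\mathcal{O}(\sqrt h)$, which is incompatible with $I(x_0,v_0)=\mathcal{O}(1)$; and your identification of the leading term of the invariant as $|\tilde B w^0_\perp|^2/(2|\tilde B|^3)$ would require the slowly varying function $w^0$ to carry the gyration, contradicting the defining property that modulation functions have $\epsilon$-uniformly bounded derivatives. The paper's almost-invariant is
\begin{equation*}
\widehat{\mathcal{M}}=\frac{1}{2}\,\frac{\frac{1}{2} h\tilde\omega\cos(\frac{1}{2} h\tilde\omega)}{\sin(\frac{1}{2} h\tilde\omega)}\big(|\tilde\eta^1_1|^2+|\tilde\eta^{-1}_{-1}|^2\big)+\mathcal{O}(h),
\end{equation*}
carried entirely by the $\pm1$ modes, and it is not $I$ itself: it differs by the factor $\tan(\frac{1}{2}h\tilde\omega)/(\frac{1}{2}h\tilde\omega)$. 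It is this conversion, together with the $1/|\cos(\frac{1}{2}h\tilde\omega)|$ factors in the bounds of $\tilde\eta^{\pm1}_0,\ \tilde\eta^{1}_{-1},\ \tilde\eta^{-1}_{1}$ (coming from the symbol $(L_3L_4^{-1})(hD\pm\ii h\tilde\omega)$, whose middle entry is $\pm2\ii\tan(\frac{1}{2}h\tilde\omega)/h$), that produces the two error terms in the statement. As sketched, your final comparison step between $\mathcal{J}$ and $I$ would fail.

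A second, smaller but genuine, flaw is your justification of the symmetry: it is not true that "the original dynamics is invariant under the one-parameter group of rotations generated by $\tilde B$" --- the potential $U$ is arbitrary and need not be axially symmetric, yet the theorem holds without any such hypothesis. The invariance actually used is the \emph{formal} one of the extended modulation potential $\mathcal{U}(\vec q)$, which is unchanged under $q^k\mapsto e^{\ii k\lambda\tilde\omega}q^k$ simply because every term in its construction involves multi-indices $\alpha$ with $s(\alpha)=0$, so the phases cancel identically. If your argument genuinely relied on rotational invariance of the physical system, it would prove only a special case; this justification must be replaced by the formal invariance of $\mathcal{U}$, after which the differentiation-in-$\lambda$ argument (evaluated at $\tau=\frac{1}{2}$, which is needed to handle the averaged nonlinearity $\int_0^1F(x_n+\sigma(x_{n+1}-x_n))\,d\sigma$ of this scheme) goes through as in the paper.
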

\begin{rem}We note that it is easy to make a choice of   the stepsize $h$ such that  $\abs{\cos(\frac{1}{2}
h\tilde{\omega})}$ is not small. In this case,  an improved
  result concerning the long time magnetic moment conservation is
  obtained
  \begin{equation*}
I(x_n,v_n) =I(x_0,v_0)+\mathcal{O}(h)
\end{equation*}
for $0\leq nh\leq h^{-N+1}.$
\end{rem}


\subsection{Numerical experiment}
 \begin{figure}[ptb]
\centering
\includegraphics[width=6cm,height=3cm]{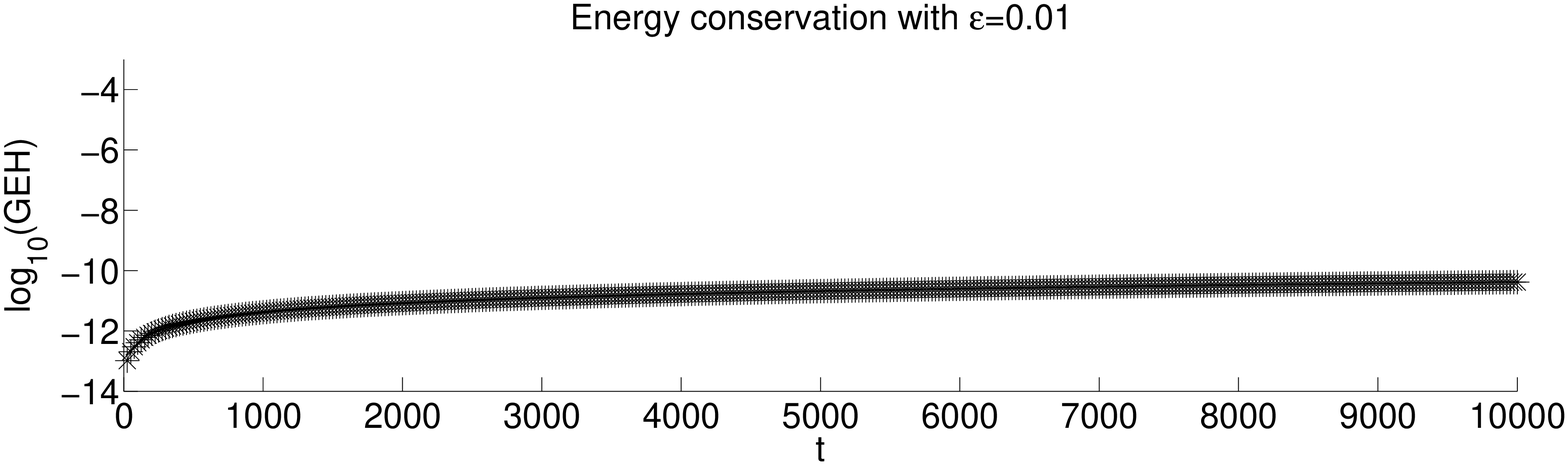}
\includegraphics[width=6cm,height=3cm]{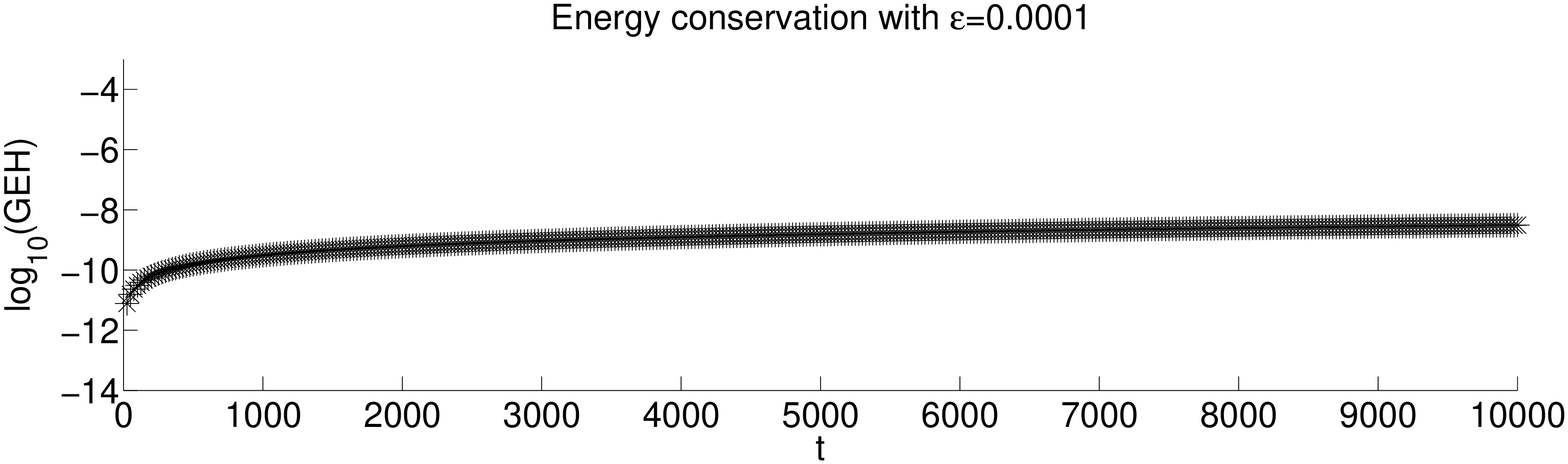}
\caption{the logarithm of the  energy relative  errors  against
$t$.} \label{p1}
\end{figure}
 \begin{figure}[ptb]
\centering
\includegraphics[width=6cm,height=3cm]{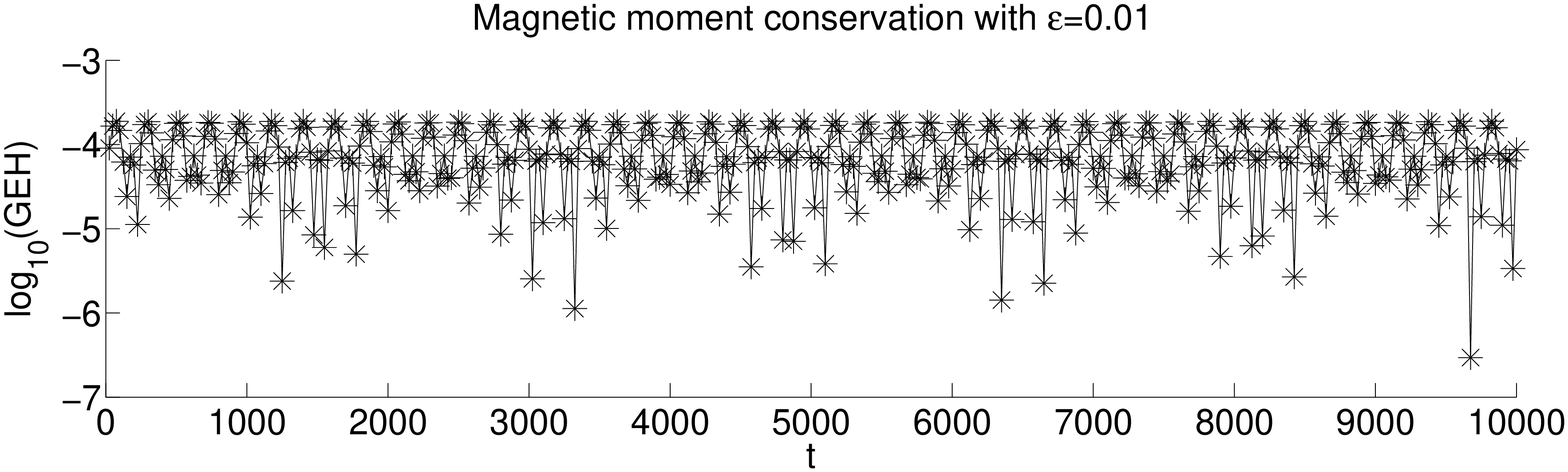}
\includegraphics[width=6cm,height=3cm]{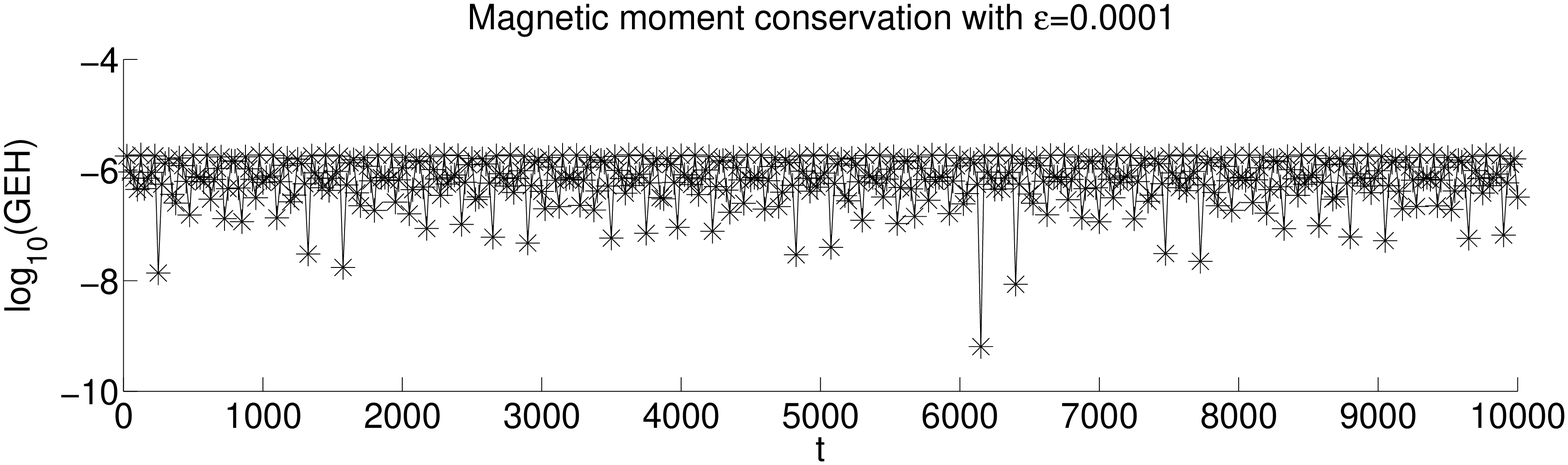}
\caption{the logarithm of the  magnetic moment relative errors
against $t$.} \label{p2}
\end{figure}
As an illustrative numerical experiment, we consider  the charged
particle system of \cite{Hairer2017-2} with a constant magnetic
field and  an additional factor $1/\epsilon$. The system can be
given by \eqref{charged-particle sts-cons} with the potential
$U(x)=\frac{1}{100\sqrt{x_1^2+x_2^2}}$ and the constant magnetic
field $B=(0,0,1)^{\intercal}.$  The initial values are chosen as
$x(0)=(0.7,1,0.1)^{\intercal}$ and $v(0)=(0.9,0.5,0.4)^{\intercal}.$

We consider applying
 four-point Gauss-Legendre's rule   to the integral of the  EEP integrator \eqref{EAVF}. The fixed-point iteration
 is chosen here and we set $10^{-16}$ as the error tolerance
 and $50$ as the maximum number of each iteration. We choose
 $\epsilon=0.01,0.0001$.
This problem is integrated on $[0,10000]$ with $h=0.01$ and see
Figures \ref{p1}-\ref{p2} for the relative errors
$(H(x_{n},v_{n})-H(x^0,v^0))/H(x^0,v^0)$ of the energy and
$(I(x_{n},v_{n})-I(x^0,v^0))/I(x^0,v^0)$  of the magnetic moment,
respectively. In order to show the performance of our EEP method, we
choose Boris method for comparison. We consider $\epsilon=0.0001$
and solve this system on $[0,10000]$ with $h=0.01$. The results of
Boris method are shown in Figure \ref{p3}. Finally, the problem is
solved with $\epsilon=0.05,$  $T=10,100, 1000$ and $h=
1/(50\times2^{i})$ for $i=0,\ldots,3$. The   global errors are shown
in Figure \ref{p4}.

 From the results, it can be  observed that our
EEP method   shows an excellent energy-preserving property, a
prominent long-term behavior in the numerical magnetic moment
conservation and a good accuracy. All these observations support the
theoretical results given in Theorems \ref{energy pre thm}- \ref{2
sym Long-time thm}.

 \begin{figure}[ptb]
\centering
\includegraphics[width=6cm,height=3cm]{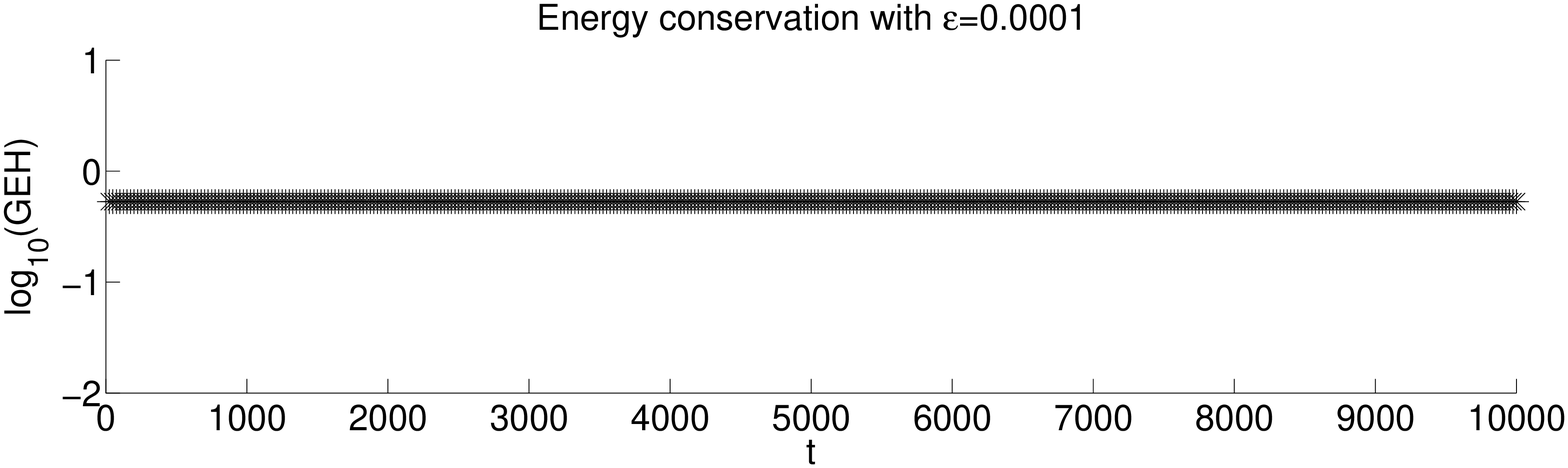}
\includegraphics[width=6cm,height=3cm]{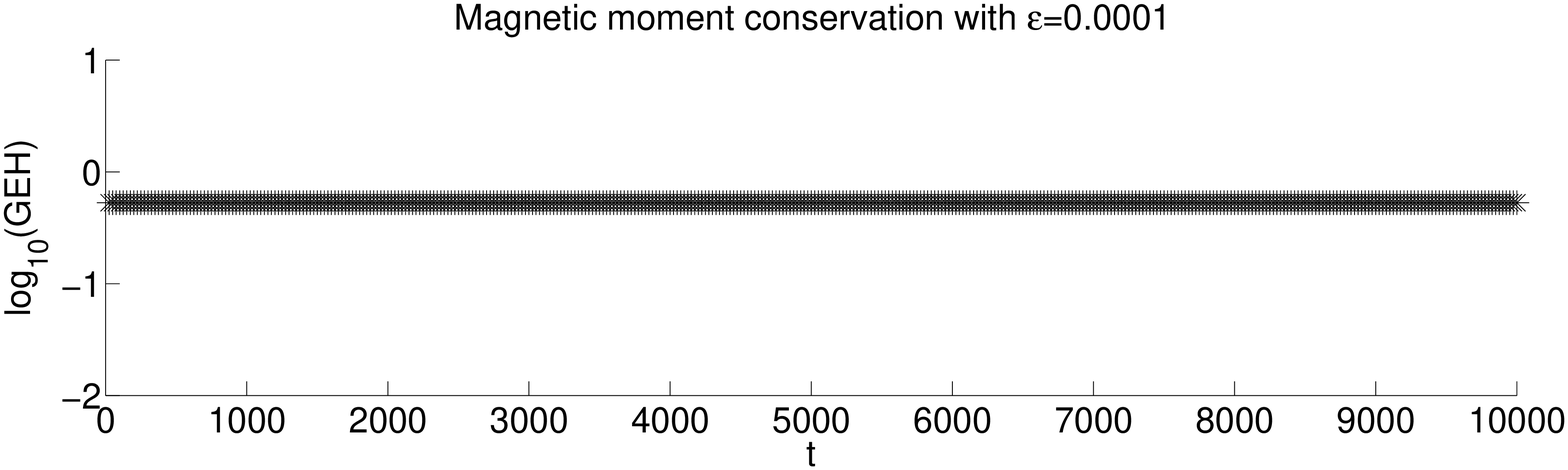}
\caption{the logarithm of the  relative  errors for Boris method
against $t$.} \label{p3}
\end{figure}

 \begin{figure}[ptb]
\centering
\includegraphics[width=4cm,height=5cm]{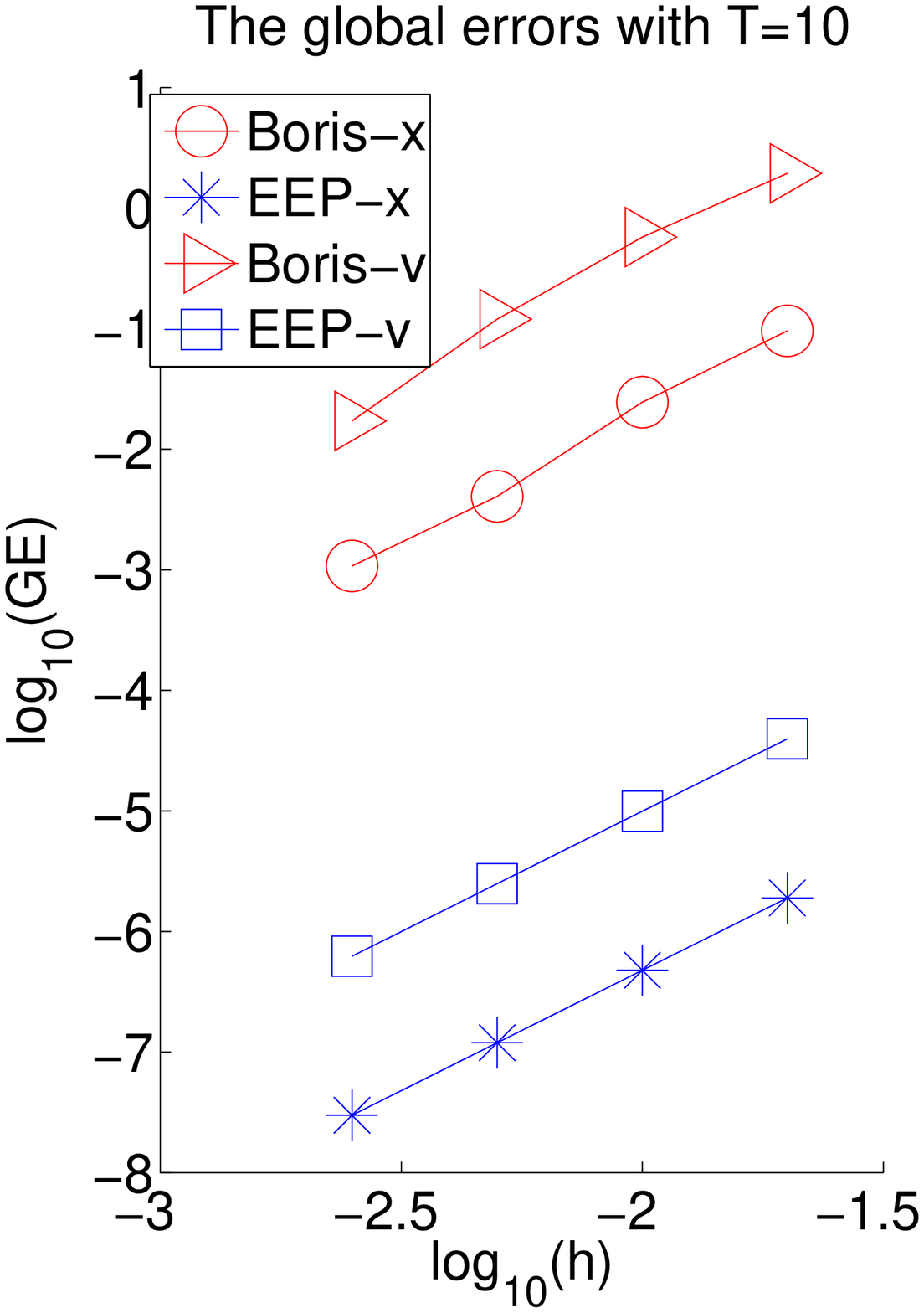}
\includegraphics[width=4cm,height=5cm]{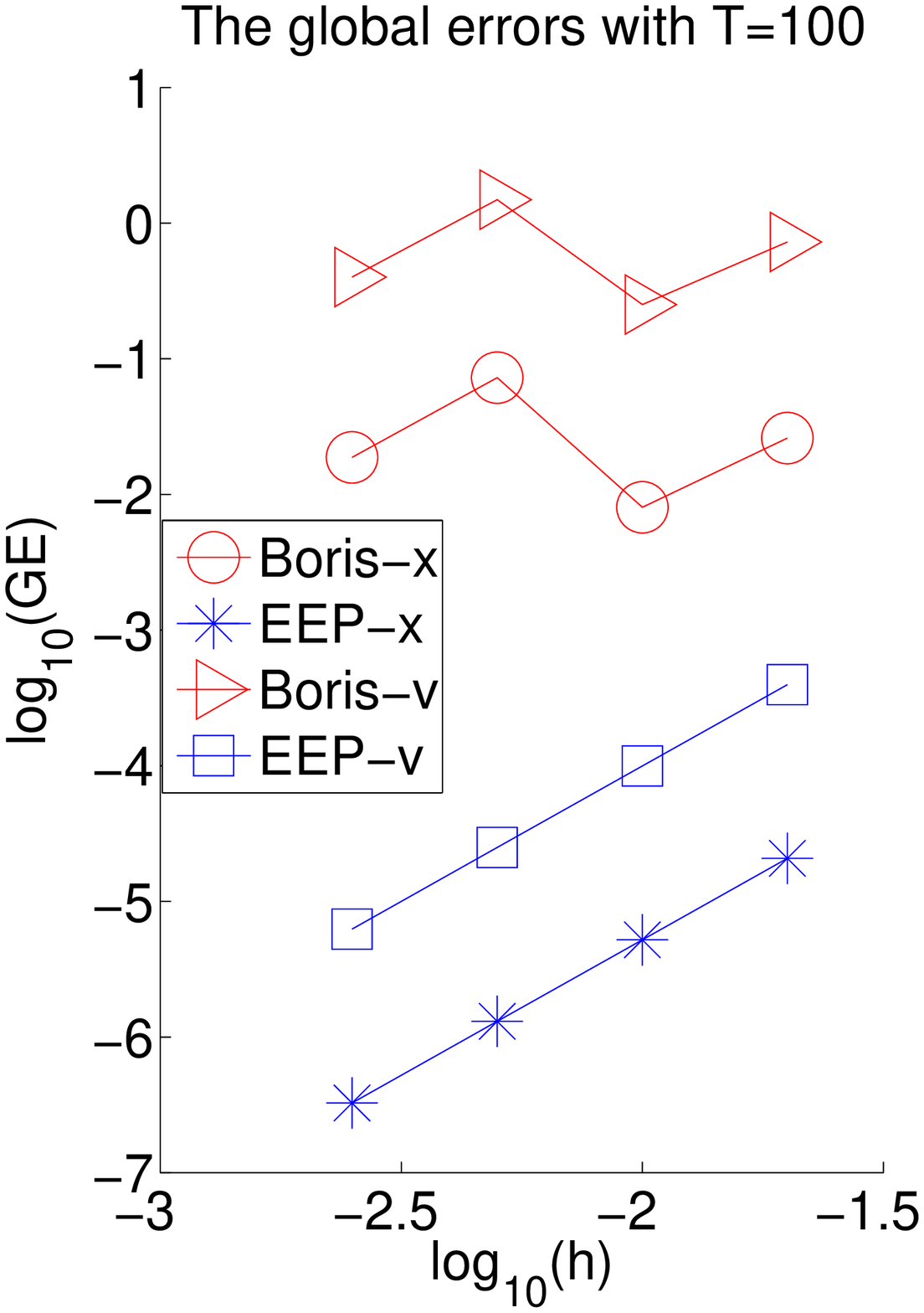}
\includegraphics[width=4cm,height=5cm]{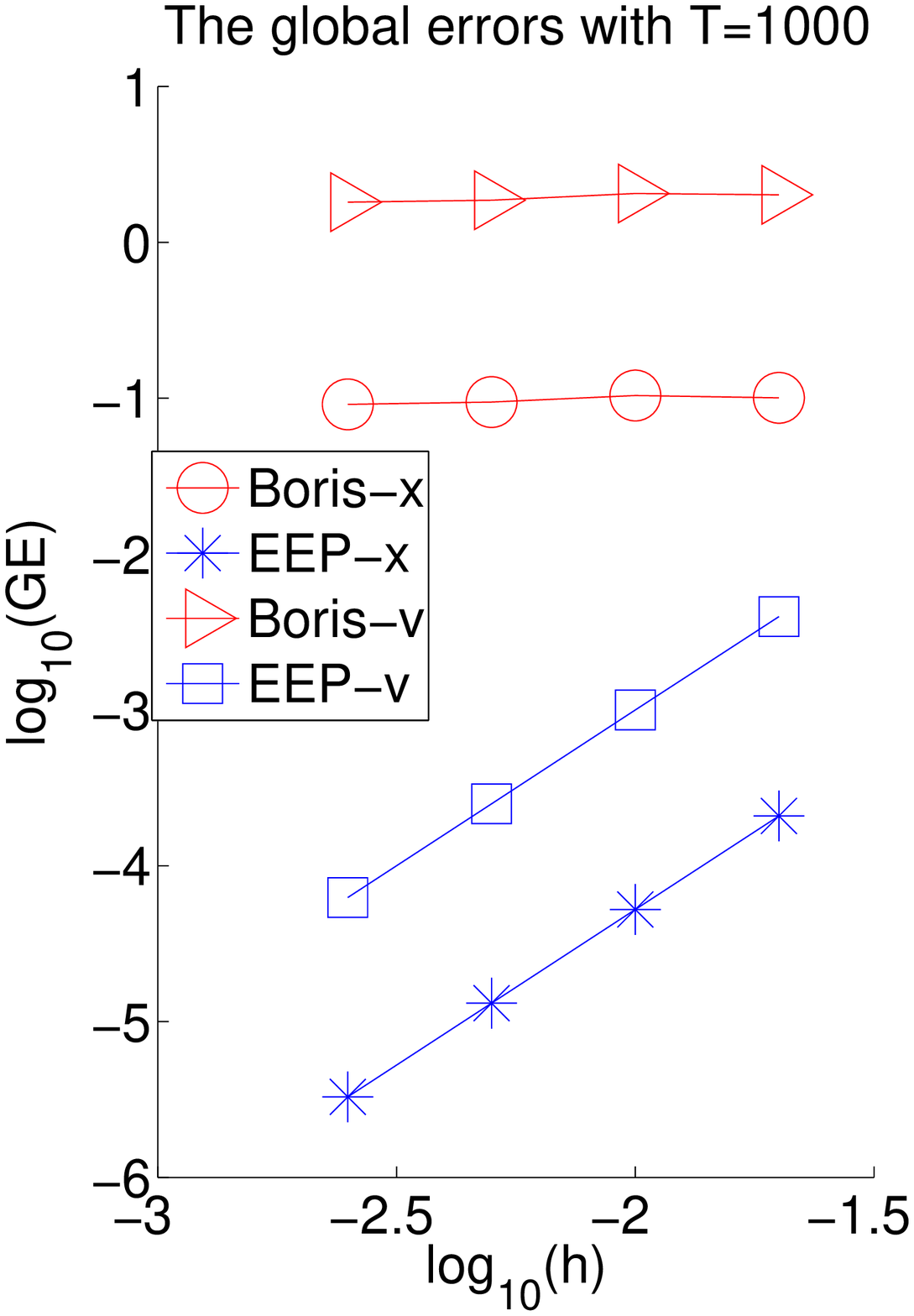}
\caption{the  global errors.} \label{p4}
\end{figure}

\section{Proof of energy preservation}\label{sec:proof1}
In this section, we give the proof of Theorem \ref{energy pre thm}.

\begin{proof}
In this paper, we  let $\mathcal{I}:=\int_{0}^1
F\big(x_{n}+\sigma(x_{n+1}-x_{n}) \big) d\sigma$ for brevity.  We
firstly compute
\begin{equation}
\begin{array}[c]{ll}E(x_{n+1},v_{n+1})=\dfrac{1}{2}v_{n+1}^{\intercal}v_{n+1}+U(x_{n+1}).\end{array}\label{for1}%
\end{equation}
Keeping  the fact in mind that $\tilde{B}$ is skew-symmetric, one
obtains that
$$(e^{\frac{h}{\epsilon} \tilde{B}})^{\intercal}=e^{-\frac{h}{\epsilon} \tilde{B}},\
(\varphi_{1}( \frac{h}{\epsilon}
\tilde{B}))^{\intercal}=\varphi_{1}(-\frac{h}{\epsilon} \tilde{B}),\
(\varphi_{2}( \frac{h}{\epsilon}
\tilde{B}))^{\intercal}=\varphi_{2}(-\frac{h}{\epsilon}
\tilde{B}).$$ Inserting the second formula of \eqref{EAVF} into
\eqref{for1} with some manipulation yields
\begin{equation*}
\begin{aligned}&E(x_{n+1},v_{n+1})
=\frac{1}{2}\big(e^{\frac{h}{\epsilon} \tilde{B}}v_{n}+h
\varphi_{1}( \frac{h}{\epsilon}
\tilde{B})\mathcal{I}\big)^{\intercal}
 \big(e^{\frac{h}{\epsilon} \tilde{B}}v_{n}+h
\varphi_{1}( \frac{h}{\epsilon} \tilde{B})\mathcal{I}\big)+U(x_{n+1})\\
=&\dfrac{1}{2}v_{n}^{\intercal} e^{-\frac{h}{\epsilon} \tilde{B}}
e^{\frac{h}{\epsilon} \tilde{B}} v_{n}+ h\mathcal{I}^{\intercal}
\varphi_{1}(-\frac{h}{\epsilon} \tilde{B})e^{\frac{h}{\epsilon}
\tilde{B}} v_{n} +\dfrac{1}{2}h^2\mathcal{I}^{\intercal}
\varphi_{1}(-\frac{h}{\epsilon}
\tilde{B})\varphi_{1}(\frac{h}{\epsilon} \tilde{B})\mathcal{I}
+U(x_{n+1}).\end{aligned}
\end{equation*}
 It follows from \eqref{phi}  that $\varphi_{1}(-\frac{h}{\epsilon}
\tilde{B})e^{\frac{h}{\epsilon}
\tilde{B}}=\varphi_{1}(\frac{h}{\epsilon} \tilde{B}).$ Therefore, we
obtain
\begin{equation}
\begin{aligned}E(x_{n+1},v_{n+1})
=\dfrac{1}{2}v_{n}^{\intercal}   v_{n}+ h\mathcal{I}^{\intercal}
\varphi_{1}(\frac{h}{\epsilon} \tilde{B})  v_{n}
+\dfrac{1}{2}h^2\mathcal{I}^{\intercal}
\varphi_{1}(-\frac{h}{\epsilon}
\tilde{B})\varphi_{1}(\frac{h}{\epsilon} \tilde{B})\mathcal{I} +U(x_{n+1}).\end{aligned}\label{for2}%
\end{equation}
 On the other hand, it can be checked that
\begin{equation*}
\begin{aligned}&U(x_{n})-U(x_{n+1})
=-\displaystyle\int_{0}^{1}dU((1-\tau)x_{n}+\tau x_{n+1})\\
=&-\displaystyle\int_{0}^{1}(x_{n+1}-x_n)^{\intercal}\nabla_x
U((1-\tau)x_{n}+\tau x_{n+1})d\tau
= \mathcal{I}^{\intercal}(x_{n+1}-x_n)\\
=&h\mathcal{I}^{\intercal} \varphi_1(\frac{h}{\epsilon} \tilde{B})
v_{n}+h^2\mathcal{I}^{\intercal} \varphi_2(\frac{h}{\epsilon}
\tilde{B})\mathcal{I}, \end{aligned}
\end{equation*}
where  the first equation of \eqref{EAVF} has been used. Inserting
this result  into \eqref{for2} implies
\begin{equation}
\begin{aligned}E(x_{n+1},v_{n+1})
=&\dfrac{1}{2}v_{n}^{\intercal}   v_{n}+ h\mathcal{I}^{\intercal}
\varphi_{1}(\frac{h}{\epsilon} \tilde{B})  v_{n}
+\dfrac{1}{2}h^2\mathcal{I}^{\intercal} \varphi_{1}(-\frac{h}{\epsilon} \tilde{B})\varphi_{1}(\frac{h}{\epsilon} \tilde{B})\mathcal{I} +U(x_{n})\\
&-h\mathcal{I}^{\intercal} \varphi_1(\frac{h}{\epsilon} \tilde{B})
v_{n}-h^2\mathcal{I}^{\intercal} \varphi_2(\frac{h}{\epsilon}
\tilde{B})\mathcal{I}\\
=&\dfrac{1}{2}v_{n}^{\intercal}   v_{n}+U(x_{n})
+\dfrac{1}{2}h^2\mathcal{I}^{\intercal}
\Big(\varphi_{1}(-\frac{h}{\epsilon}
\tilde{B})\varphi_{1}(\frac{h}{\epsilon}
\tilde{B})-2\varphi_2(\frac{h}{\epsilon}
\tilde{B})\Big)\mathcal{I} .\end{aligned}\label{for6}%
\end{equation}
It follows from the definition \eqref{phi} that
$\varphi_{1}(-\frac{h}{\epsilon}
\tilde{B})\varphi_{1}(\frac{h}{\epsilon}
\tilde{B})-2\varphi_2(\frac{h}{\epsilon}
\tilde{B})=\sum\limits_{k=1}^{\infty}c_k\big(\frac{h}{\epsilon}
\tilde{B}\big)^{2k+1}$ with the coefficients $c_k$ for
$k=1,2,\ldots$, which gives that
\begin{equation*}
\begin{aligned}&\Big(\varphi_{1}(-\frac{h}{\epsilon}
\tilde{B})\varphi_{1}(\frac{h}{\epsilon}
\tilde{B})-2\varphi_2(\frac{h}{\epsilon}
\tilde{B})\Big)^{\intercal}=\sum\limits_{k=1}^{\infty}c_k\big(-\frac{h}{\epsilon}
\tilde{B}\big)^{2k+1}\\
&=-\sum\limits_{k=1}^{\infty}c_k\big(\frac{h}{\epsilon}
\tilde{B}\big)^{2k+1}=-\Big(\varphi_{1}(-\frac{h}{\epsilon}
\tilde{B})\varphi_{1}(\frac{h}{\epsilon}
\tilde{B})-2\varphi_2(\frac{h}{\epsilon}
\tilde{B})\Big).\end{aligned}
\end{equation*} Based on
this result, one arrives at
\begin{equation*}
\begin{aligned} \mathcal{I}^{\intercal}
\Big(\varphi_{1}(-\frac{h}{\epsilon}
\tilde{B})\varphi_{1}(\frac{h}{\epsilon}
\tilde{B})-2\varphi_2(\frac{h}{\epsilon} \tilde{B})\Big)\mathcal{I}
&=\Big[\mathcal{I}^{\intercal} \Big(\varphi_{1}(-\frac{h}{\epsilon}
\tilde{B})\varphi_{1}(\frac{h}{\epsilon}
\tilde{B})-2\varphi_2(\frac{h}{\epsilon}
\tilde{B})\Big)\mathcal{I}\Big]^{\intercal}\\
&=-\mathcal{I}^{\intercal} \Big(\varphi_{1}(-\frac{h}{\epsilon}
\tilde{B})\varphi_{1}(\frac{h}{\epsilon}
\tilde{B})-2\varphi_2(\frac{h}{\epsilon}
\tilde{B})\Big)\mathcal{I},\end{aligned}
\end{equation*}
which shows that $\mathcal{I}^{\intercal}
\Big(\varphi_{1}(-\frac{h}{\epsilon}
\tilde{B})\varphi_{1}(\frac{h}{\epsilon}
\tilde{B})-2\varphi_2(\frac{h}{\epsilon}
\tilde{B})\Big)\mathcal{I}=0.$ Therefore, \eqref{for6} becomes
$$E(x_{n+1},v_{n+1})=\dfrac{1}{2}v_{n}^{\intercal}   v_{n}+U(x_{n})=E(x_{n},v_{n}).$$
\end{proof}

\section{Proof of the magnetic moment conservation}\label{sec:proof2}
This section is devoted to the proof of Theorem \ref{2 sym Long-time
thm}.   Modulated Fourier expansion will be used here.   We will
first derive the modulated Fourier expansion for the EEP method in
Subsection \ref{subs: mfe}  and then show an almost-invariant of the
expansion in Subsection \ref{subs: ai}. Based on these analysis,
Theorem \ref{2 sym Long-time thm} will be proved immediately.

Since the    matrix $\tilde{B}$ is  skew-symmetric,  there exists a
unitary matrix $P$ and a   diagonal matrix $\Lambda$ such that $
\tilde{B}=P \Lambda P^\textup{H}$ with $\Lambda=
\textmd{diag}(-|\tilde{B}|\mathrm{i},0,|\tilde{B}|\mathrm{i}) $. By
the linear change of variable
 \begin{equation}\label{change of variable}\tilde{x}(t)= P^\textup{H} x(t),\quad \tilde{v}(t)= P^\textup{H} v(t),\end{equation}
 we rewrite the system
\eqref{charged-sts-first order}  as
\begin{equation}\label{necharged-sts-first order}
\begin{array}[c]{ll}
  \dot{\tilde{x}}=\tilde{v}, &\tilde{x}_0= P^\textup{H} x_0,\\
 \dot{\tilde{v}}=\mathrm{i}\tilde{\Omega} \tilde{v}+
 \tilde{F}(\tilde{x}),\ &\tilde{v}_0= P^\textup{H} v_0,
\end{array}
\end{equation}
where $\tilde{\Omega}=
\textmd{diag}(-\tilde{\omega},0,\tilde{\omega})$ with
$\tilde{\omega}=\frac{|\tilde{B}|}{\epsilon}$ and
$\tilde{F}(\tilde{x})=P^\textup{H} F(P \tilde{x})=
-\nabla_{\tilde{x}} U(P\tilde{x})$. In this paper, a vector $x$ in
$\RR^3$ or $\CC^3$ is denoted by $x=(x_{-1},x_0,x_1)^{\intercal}$.
%
For the transformed  system \eqref{necharged-sts-first order}, its
 magnetic moment has the following form
\begin{equation}\label{newmomentum for B}
\begin{array}[c]{ll}
I(x,v)&=\frac{1}{2 \abs{ \tilde{B}}^3} \abs{\tilde{B}v}^2=\frac{1}{2
\abs{ \tilde{B}}^3} \abs{P \Lambda \tilde{v}}^2=\frac{1}{2 \abs{
\tilde{B}}^3}
\abs{  \Lambda \tilde{v}}^2\\
&=\frac{1}{2 \abs{ \tilde{B}}^3}\abs{ \tilde{B}}^2(
 \abs{ \tilde{v}_{-1}}^2+\abs{\tilde{v}_{1}}^2)=\frac{1}{2 \abs{ \tilde{B}} } (
 \abs{ \tilde{v}_{-1}}^2+\abs{\tilde{v}_{1}}^2):=\tilde{I}(\tilde{x},\tilde{v}).\end{array}
\end{equation}
 The EEP integrator
\eqref{EAVF} for solving this transformed  system   is defined as
\begin{equation}\label{ne exp integ one-stage}
\left\{\begin{array}[c]{ll}
&\tilde{x}_{n+1}=\tilde{x}_n+h\varphi_1(\mathrm{i}h\tilde{\Omega})
\tilde{v}_{n}+h^2 \varphi_2(\mathrm{i}h\tilde{\Omega}) \int_{0}^1
\tilde{F}\big(\tilde{x}_{n}+\sigma(\tilde{x}_{n+1}-\tilde{x}_{n}) \big) d\sigma,\\
&\tilde{v}_{n+1}=e^{
\mathrm{i}h\tilde{\Omega}}\tilde{v}_n+h\varphi_1(\mathrm{i}h\tilde{\Omega})
\int_{0}^1
\tilde{F}\big(\tilde{x}_{n}+\sigma(\tilde{x}_{n+1}-\tilde{x}_{n})
\big) d\sigma.
\end{array}\right.
\end{equation}

 In this section, the following   five operators will be used:
\begin{equation}\label{LLL}
\begin{aligned}L_1(hD):&=\mathrm{e}^{hD}-e^{
\mathrm{i}h\tilde{\Omega}},\\
L_2(hD):&=\varphi_1(\mathrm{i}h\tilde{\Omega})\mathrm{e}^{\frac{1}{2}hD},\\
L_3(hD):&=\varphi_1(\mathrm{i}h\tilde{\Omega})(\mathrm{e}^{hD}-1),\\
L_4(hD):&=h\varphi_2(\mathrm{i}h\tilde{\Omega}) \mathrm{e}^{hD}
+h\varphi^2_1(\mathrm{i}h\tilde{\Omega})-he^{
\mathrm{i}h\tilde{\Omega}}\varphi_2(\mathrm{i}h\tilde{\Omega}),\\
L_5(hD,\tau,k):&= (1-\tau)\mathrm{e}^{-\mathrm{i}
\frac{h}{2}k\tilde{\omega} }\mathrm{e}^{-
 \frac{h}{2}D} +\tau\mathrm{e}^{\mathrm{i} \frac{h}{2}k\tilde{\omega}} \mathrm{e}^{
 \frac{h}{2}D},\\
 L(hD):&=(L_1L_2^{-1} L_3L_4^{-1} )(hD),\end{aligned}
\end{equation} where $D$ is the
differential operator (see \cite{hairer2006}). We have the following
properties of these operators.
\begin{prop}\label{lhd pro}
 The Taylor expansions  of the operator $L(hD)$ are   expressed by
\begin{equation*}
\begin{aligned}&L(hD)= -\mathrm{i}\tilde{\Omega} hD+hD ^2+\cdots,\\
&L(hD+\mathrm{i}h\tilde{\omega})=\textmd{diag}\Big(-\frac{h\tilde{\omega}^2\sin(h\tilde{\omega})}{h\tilde{\omega}\cos(\frac{h\tilde{\omega}}{2})-\sin(\frac{h\tilde{\omega}}{2})},
\frac{-8\csc(h\tilde{\omega})\sin^3(\frac{h\tilde{\omega}}{2})}{h},0\Big)
+\textmd{diag}\\
&\Big(\frac{h^2\tilde{\omega}^2(4h\tilde{\omega}\cos^3(\frac{h\tilde{\omega}}{2})+\sin
(\frac{h\tilde{\omega}}{2})-\sin(\frac{3h\tilde{\omega}}{2}))}{4(\sin(\frac{h\tilde{\omega}}{2})-h\tilde{\omega}\cos(\frac{h\tilde{\omega}}{2}))^2},
(3+\cos(h\tilde{\omega}))\sec(\frac{3h\tilde{\omega}}{2})\tan(\frac{3h\tilde{\omega}}{2}),\frac{1}{2}h^2\tilde{\omega}^2\csc(\frac{h\tilde{\omega}}{2}) \Big)(\mathrm{i}D)+\cdots,\\
&L(hD-\mathrm{i}h\tilde{\omega})=\textmd{diag}\Big(
0,\frac{-8\csc(h\tilde{\omega})\sin^3(\frac{h\tilde{\omega}}{2})}{h},
-\frac{h\tilde{\omega}^2\sin(h\tilde{\omega})}{h\tilde{\omega}\cos(\frac{h\tilde{\omega}}{2})-\sin(\frac{h\tilde{\omega}}{2})}
\Big)+\textmd{diag}\\
&\Big(-\frac{1}{2}h^2\tilde{\omega}^2\csc(\frac{h\tilde{\omega}}{2}),
-(3+\cos(h\tilde{\omega}))\sec(\frac{3h\tilde{\omega}}{2})\tan(\frac{3h\tilde{\omega}}{2}),\frac{h^2\tilde{\omega}^2(4h\tilde{\omega}\cos^3(\frac{h\tilde{\omega}}{2})+\sin
(\frac{h\tilde{\omega}}{2})-\sin(\frac{3h\tilde{\omega}}{2}))}{-4(\sin(\frac{h\tilde{\omega}}{2})-h\tilde{\omega}\cos(\frac{h\tilde{\omega}}{2}))^2}\Big) (\mathrm{i} D)+\cdots,\\
&L(hD+\mathrm{i}kh\tilde{\omega})=-  8h\tilde{\Omega}^2
\sin(\frac{hk\omega I}{2}) \sin(\frac{-h\tilde{\Omega}+hk\omega
I}{2})\Gamma_1/\Gamma_2+(\cdot)
 (\mathrm{i} D)+\cdots, \ \textmd{for}\ \abs{k}>1,
\end{aligned}
\end{equation*}
where
\begin{equation*}
\begin{aligned}\Gamma_1=&\cos(\frac{h\tilde{\Omega}-hk\omega
I}{2})-\cos(\frac{h\tilde{\Omega}+hk\omega
I}{2})+h\tilde{\Omega}\sin(\frac{h\tilde{\Omega}-hk\omega
I}{2}),\\
\Gamma_2=&
\big(-I+\cos(h\tilde{\Omega})+\cos(hk\tilde{\omega})I-\cos(h\tilde{\Omega}+hk\tilde{\omega}I)
+h\tilde{\Omega}\sin(h\tilde{\Omega})-h\tilde{\Omega}\sin(hk\tilde{\omega})I\big)^2\\
&+\big(-h\tilde{\Omega}
\cos(h\tilde{\Omega})+h\tilde{\Omega}\cos(hk\tilde{\omega})I+\sin(h\tilde{\Omega})
+\sin(hk\tilde{\omega})I-\sin(h\tilde{\Omega}+hk\tilde{\omega}I)\big)^2.
\end{aligned}
\end{equation*}
 The operator $(L_3L_4^{-1})(hD)$ has the following Taylor
expansions
\begin{equation*}
\begin{aligned}&(L_3L_4^{-1})(hD)= D-\frac{-2+h\tilde{\Omega}\cot(\frac{h\tilde{\Omega}}{2})}{2h\tilde{\Omega}}(\mathrm{i}hD^2)+\cdots,\\
&(L_3L_4^{-1})(hD-\mathrm{i}h\tilde{\omega})=\mathrm{i}\textmd{diag}\Big(-\tilde{\omega},
-\frac{2\tan(\frac{h\tilde{\omega}}{2})}{h},
\frac{\tilde{\omega}}{-1+h\tilde{\omega}\cot(\frac{h\tilde{\omega}}{2})}
 \Big)+\cdots,\\
&(L_3L_4^{-1})(hD+\mathrm{i}h\tilde{\omega})=\mathrm{i}\textmd{diag}\Big(
 \frac{\tilde{\omega}}{-1+h\tilde{\omega}\cot(\frac{h\tilde{\omega}}{2})},\frac{2\tan(\frac{h\tilde{\omega}}{2})}{h},
\tilde{\omega} \Big)+\cdots,\\
&(L_3L_4^{-1})(hD+\mathrm{i}hk\tilde{\omega})=-
8\tilde{\Omega}\sin(\frac{h \Omega}{2})\sin(\frac{hk\omega
I}{2})\Gamma_1/\Gamma_2\mathrm{i}+(\cdot)
 ( D)+\cdots, \ \textmd{for}\ \abs{k}>1.
\end{aligned}
\end{equation*}
  Moreover, for the operator $L_5(hD,\tau,k)$ and $\abs{k}>0$, it is true that
\begin{equation*}
\begin{aligned}
L_5(hD,
\frac{1}{2},k)=\cos(\frac{kh\tilde{\omega}}{2})+\frac{1}{2}\sin(\frac{kh\tilde{\omega}}{2})(\textmd{i}
hD)+\cdots.
\end{aligned}
\end{equation*}
\end{prop}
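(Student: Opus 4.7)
The plan is to exploit the fact that $\tilde{\Omega}=\mathrm{diag}(-\tilde{\omega},0,\tilde{\omega})$ is diagonal, so every operator $L_j(hD)$ defined in \eqref{LLL} is diagonal in the same basis, with scalar meromorphic entries in $z=hD$. First I would write, for each $j\in\{1,2,3,4\}$ and each index $m\in\{-1,0,1\}$, the scalar function $L_j^{(m)}(z)$ obtained by substituting $\mathrm{i}\tilde{\Omega}\mapsto \mathrm{i}m\tilde{\omega}$ inside $\varphi_1$, $\varphi_2$ and $e^{\mathrm{i}h\tilde{\Omega}}$. Since $\varphi_1(w)=(e^{w}-1)/w$ and $\varphi_2(w)=(e^{w}-1-w)/w^{2}$, these entries are rational functions of $e^{z}$ with polynomial denominators in $h\tilde{\omega}$. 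The composed operator is then diagonal with entries $L^{(m)}(z)=L_1^{(m)}(z)L_3^{(m)}(z)/\bigl(L_2^{(m)}(z)L_4^{(m)}(z)\bigr)$, so the proposition reduces to Taylor-expanding three explicit scalar functions at four evaluation points.

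For the expansion at $hD=0$, I would plug $e^{z}=1+z+z^{2}/2+\cdots$ into the rational entries and collect the two lowest orders; the $m=0$ entry simplifies immediately and the $m=\pm 1$ entries combine to give the stated $-\mathrm{i}\tilde{\Omega}\,hD+hD^{2}+\cdots$. For the shifted evaluations $z\mapsto hD\pm\mathrm{i}h\tilde{\omega}$, the key observation is that at $z=\pm\mathrm{i}h\tilde{\omega}$ the exponential $e^{z}$ coincides with exactly one eigenvalue of $e^{\mathrm{i}h\tilde{\Omega}}$, so $L_1^{(\mp 1)}$ vanishes there; this forces the corresponding diagonal slot of $L$ to be zero at leading order, accounting for the observed zeros. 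The two remaining entries are simplified using the identities $e^{\mathrm{i}h\tilde{\omega}}-1=2\mathrm{i}\,e^{\mathrm{i}h\tilde{\omega}/2}\sin(h\tilde{\omega}/2)$ and analogues to reach the $\csc,\sec,\tan$ forms, and differentiation at the evaluation point produces the $(\mathrm{i}D)^{1}$ coefficients. The general template for $|k|>1$ comes from carrying out the same computation without specialising $k$ and regrouping via sum-to-product identities so that the denominator becomes $\Gamma_{2}$ and the numerator $\Gamma_{1}$.

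The expansions of $(L_3L_4^{-1})(hD)$ are obtained from the same scalar data by dropping the $L_1L_2^{-1}$ factor, so no further calculation is needed beyond simplification at the four evaluation points. For $L_5(hD,\tfrac{1}{2},k)$, the definition telescopes into $\cosh\bigl(\tfrac{1}{2}(\mathrm{i}hk\tilde{\omega}+hD)\bigr)=\cos(hk\tilde{\omega}/2)\cosh(hD/2)+\mathrm{i}\sin(hk\tilde{\omega}/2)\sinh(hD/2)$, whose Taylor expansion in $hD$ immediately yields $\cos(hk\tilde{\omega}/2)+\tfrac{1}{2}\sin(hk\tilde{\omega}/2)(\mathrm{i}hD)+\cdots$.

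The main obstacle will be the algebraic weight of the expansions at $z=\pm\mathrm{i}h\tilde{\omega}$: the scalar denominator $L_2^{(m)}L_4^{(m)}$ mixes three distinct $\varphi$-values, and matching the stated closed forms requires repeated use of half-angle and product-to-sum identities. On the diagonal slot where $L_1^{(m)}$ vanishes one must also verify that $L_4^{(m)}$ does not vanish simultaneously, or otherwise resolve a removable singularity via l'Hôpital. To control this I would compute the low-order Taylor coefficients entry by entry, and cross-check the $k=\pm 1$ results by formally specialising the general $|k|>1$ formula in the stated $\Gamma_{1}/\Gamma_{2}$ form and taking the appropriate limit, so that consistency between the two presentations is guaranteed.
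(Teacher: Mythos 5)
The paper offers no proof of this proposition at all---the expansions are simply asserted as the outcome of direct computation---and your plan is precisely that computation, correctly organized: reduce to the three scalar diagonal entries (where the $\varphi_1$ factors cancel in $L_2^{-1}L_3$, leaving $e^{z/2}-e^{-z/2}$), Taylor-expand at the four shifted arguments, use the resonance vanishing of $L_1$ to explain the leading-order zeros, and telescope $L_5(hD,\tfrac{1}{2},k)$ into $\cosh\big(\tfrac{1}{2}(\mathrm{i}hk\tilde{\omega}+hD)\big)$; spot-checking (e.g.\ the identity $\varphi_2(w)(1-e^w)+\varphi_1^2(w)=\varphi_1(w)$ gives $L_4^{(m)}(0)=h\varphi_1(w)$, whence the $(-1)$-entry of $L(hD+\mathrm{i}h\tilde{\omega})$ and the middle entry of $(L_3L_4^{-1})(hD-\mathrm{i}h\tilde{\omega})$ come out exactly as stated) confirms the method reproduces the claimed formulas. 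The only slip is the index in your resonance observation: at $z=\pm\mathrm{i}h\tilde{\omega}$ it is $L_1^{(\pm 1)}$ (matching sign), not $L_1^{(\mp 1)}$, that vanishes---which is exactly what places the leading-order zero in the third slot of $L(hD+\mathrm{i}h\tilde{\omega})$ and the first slot of $L(hD-\mathrm{i}h\tilde{\omega})$, as your own appeal to the ``observed zeros'' requires.
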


\subsection{Modulated Fourier expansion}\label{subs: mfe}

  A modulated Fourier expansion of the EEP method  \eqref{ne exp
integ one-stage} for solving the transformed system
\eqref{necharged-sts-first order} is given as follows.

\begin{theo}\label{energy thm}
Assume that all the conditions of Assumption \ref{ass} hold.  The
numerical solution given by the EEP integrator \eqref{ne exp integ
one-stage} admits a modulated Fourier expansion
\begin{equation}
\begin{aligned} &\tilde{x}_{n}=  \sum\limits_{|k|<N} \mathrm{e}^{\mathrm{i}k\tilde{\omega} t}\tilde{\zeta}^k(t)+\tilde{R}_{h,N}(t),\
\ \tilde{v}_{n}= \sum\limits_{|k|<N}
\mathrm{e}^{\mathrm{i}k\tilde{\omega}
t}\tilde{\eta}^k(t)+\tilde{S}_{h,N}(t),
\end{aligned}
\label{MFE-ERKN}%
\end{equation}
where   $0 \leq t=nh \leq T$, $N$ is a fixed integer such that
\eqref{numerical non-resonance cond} holds and the remainder terms
are bounded by
\begin{equation}
 \tilde{R}_{h,N}(t)=\mathcal{O}(t^2h^{N}),\ \ \ \  \tilde{S}_{h,N}(t)=\mathcal{O}(t^2h^{N-1}).\\
\label{remainder}%
\end{equation}
The bounds of the coefficient functions of $\tilde{\zeta}^k$ as well
as all their derivatives are  given by
\begin{equation}
\begin{aligned} &\dot{\tilde{\zeta}}^0_{\pm1}=\mathcal{O}(\epsilon),\qquad  \quad  \ \ddot{\tilde{\zeta}}^0_{0}=\mathcal{O}(1),\quad \qquad
 \ \ \dot{\tilde{\zeta}}^1_{1}=\mathcal{O}(\epsilon),\quad \ \   \quad \ \ \ \dot{\tilde{\zeta}}^{-1}_{-1}=\mathcal{O}(\epsilon),  \\
&\tilde{\zeta}^{1}_{-1}=\mathcal{O}\big(\frac{\epsilon^3}{\sqrt{h}}\big)\qquad
\   \tilde{\zeta}^{1}_{0}=\mathcal{O}(\epsilon^2),\qquad  \   \ \ \
\
\tilde{\zeta}^{-1}_{1}=\mathcal{O}(\frac{\epsilon^3}{\sqrt{h}}),\quad
\ \ \
\tilde{\zeta}^{-1}_{0}=\mathcal{O}(\epsilon^2),  \\
&\tilde{\zeta}^{k}=\mathcal{O}(\epsilon^{\abs{k} +1}) \quad \ \ \ \
\textmd{for}\ \abs{k}>1,
\end{aligned}
\label{coefficient func1}%
\end{equation}
and further by
\begin{equation}
 \tilde{\zeta}^0_{\pm1}=\mathcal{O}(1),\qquad \quad    \tilde{\zeta}^0_{0}=\mathcal{O}(1),\quad \qquad
\tilde{\zeta}^1_{1}=\mathcal{O}(\epsilon),\quad \ \ \ \
\tilde{\zeta}^{-1}_{-1}=\mathcal{O}(\epsilon).
\label{coefficient func1-1}%
\end{equation}
The bounds of the coefficient functions of $\tilde{\eta}^k$ as well
as all their derivatives are
\begin{equation}
\begin{aligned} &\tilde{\eta}^0_{\pm1}=\mathcal{O}(\epsilon),\qquad \qquad  \quad \ \  \tilde{\eta}^0_{0}=\mathcal{O}(1),\quad
\qquad \ \ \ \tilde{\eta}^1_{1}=\mathrm{i}\tilde{w}
\tilde{\zeta}_{1}^{1}(t)+\mathcal{O}(\epsilon),\quad \ \ \
\tilde{\eta}^{-1}_{-1}=\mathrm{i}\tilde{w} \tilde{\zeta}_{-1}^{-1}(t)+\mathcal{O}(\epsilon),  \\
&\tilde{\eta}^{1}_{-1}=\mathcal{O}\big(\frac{\epsilon^3}{\sqrt{h}\abs{\cos(\frac{h
\tilde{\omega}}{2})}}\big),\ \
\tilde{\eta}^{1}_{0}=\mathcal{O}\big(\frac{\epsilon^2}{\abs{\cos(\frac{h
\tilde{\omega}}{2})}}\big),\ \
\tilde{\eta}^{-1}_{1}=\mathcal{O}\big(\frac{\epsilon^3}{\sqrt{h}\abs{\cos(\frac{h
\tilde{\omega}}{2})}}\big),\ \
\tilde{\eta}^{-1}_{0}=\mathcal{O}\big(\frac{\epsilon^2}{\abs{\cos(\frac{h
\tilde{\omega}}{2})}}\big),  \\
&\tilde{\eta}^{k}=\mathcal{O}(\epsilon^{\abs{k}}) \qquad  \qquad
\qquad \textmd{for}\ \abs{k}>1,
\end{aligned}
\label{coefficient func2}%
\end{equation}
By noticing $P\tilde{\zeta}^{-k}=
\bar{P}\overline{\tilde{\zeta}^{k}},$
$P\tilde{\eta}^{-k}=\bar{P}\overline{\tilde{\eta}^{k}}$ and
  $P^\textup{H}\bar{P}=\left(
                        \begin{array}{ccc}
                          0 & 0 & 1 \\
                          0 & 1 & 0 \\
                          1 & 0 & 0 \\
                        \end{array}
                      \right),
$ one obtains that $\tilde{\zeta}_{-l}^{-k}=
 \overline{\tilde{\zeta}_{l}^{k}}$ and
$\tilde{\eta}_{-l}^{-k}= \overline{\tilde{\eta}_{l}^{k}}$.
 The constants
symbolized by the notation depend on the constants  from Assumption
\ref{ass} and the final time $T$, but are independent of $h$ and
$\tilde{\omega}$.
\end{theo}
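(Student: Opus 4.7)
The plan is to follow the standard construction of a modulated Fourier expansion tailored to a numerical one-step method. First I would substitute the modulated-Fourier ansatz for $\tilde{x}_n$ and $\tilde{v}_n$ into the transformed EEP scheme \eqref{ne exp integ one-stage}, using $\tilde{\zeta}^k(t+h)=\mathrm{e}^{hD}\tilde{\zeta}^k(t)$ and $\tilde{\zeta}^k(t+\sigma h)=\mathrm{e}^{\sigma hD}\tilde{\zeta}^k(t)$, so that the $\sigma$-integral of $\tilde{F}$ becomes an operator of the form $L_5(hD,\sigma,k)$ acting on products of modulation functions. Taylor-expanding $\tilde{F}$ around the slow mean $\tilde{\zeta}^0$ and matching the coefficient of $\mathrm{e}^{\mathrm{i}k\tilde{\omega}t}$ for each fixed $|k|<N$ produces two coupled linear operator equations in $hD$ for $\tilde{\zeta}^k$ and $\tilde{\eta}^k$, whose nonlinearity depends only on $\tilde{\zeta}^{k'}$ with $|k'|<|k|$. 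Eliminating $\tilde{\eta}^k$ via the velocity equation leaves a single closed system for $\tilde{\zeta}^k$ whose principal symbol is precisely $L(hD+\mathrm{i}kh\tilde{\omega})$, which is why the composite operator $L=L_1L_2^{-1}L_3L_4^{-1}$ in \eqref{LLL} was introduced.

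Next I would exploit the Taylor expansions in Proposition~\ref{lhd pro} to solve these systems case by case on $k$. For $k=0$ the leading symbol $-\mathrm{i}\tilde{\Omega}(hD)+hD^2+\cdots$ turns the equations for $\tilde{\zeta}^0_{\pm1}$ into first-order ODEs in $D$ with $\mathcal{O}(\epsilon)$ forcing, giving $\dot{\tilde{\zeta}}^0_{\pm1}=\mathcal{O}(\epsilon)$, while the equation for $\tilde{\zeta}^0_0$ is second order with $\mathcal{O}(1)$ forcing, yielding $\ddot{\tilde{\zeta}}^0_0=\mathcal{O}(1)$; this is essentially the reduced guiding-centre dynamics. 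For $|k|=1$ the three diagonal entries of $L(hD\pm\mathrm{i}h\tilde{\omega})$ behave very differently: one entry is algebraic of order $1/h$ and gives $\tilde{\zeta}^{\pm1}_{\pm1}$ with an $\mathcal{O}(\epsilon)$ derivative; another must be inverted by means of the non-resonance bound $|\sin(h\tilde{\omega}/2)|\ge c\sqrt{h}$, producing the characteristic $\mathcal{O}(\epsilon^3/\sqrt{h})$ estimate for $\tilde{\zeta}^{\pm1}_{\mp1}$; and the middle entry delivers the cleaner $\mathcal{O}(\epsilon^2)$ bound on $\tilde{\zeta}^{\pm1}_0$. For $|k|>1$ the symbol of $L(hD+\mathrm{i}kh\tilde{\omega})$ is bounded away from zero uniformly in $h$ by the numerical non-resonance condition \eqref{numerical non-resonance cond}, so one solves algebraically by a formal power series in $hD$; since the right-hand side is a product of at least $|k|$ previously determined $\tilde{\zeta}^{k'}$, each carrying one factor of $\epsilon$, this yields $\tilde{\zeta}^k=\mathcal{O}(\epsilon^{|k|+1})$. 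The $\tilde{\eta}^k$ are then recovered from the velocity equation; because the operator $L_5(hD,1/2,k)$ has leading symbol $\cos(kh\tilde{\omega}/2)$, its inversion at $|k|=1$ is the source of the factor $1/|\cos(h\tilde{\omega}/2)|$ in the stated bounds on $\tilde{\eta}^{\pm1}_0$ and $\tilde{\eta}^{\pm1}_{\mp1}$, as well as of the rigid identity $\tilde{\eta}^{\pm1}_{\pm1}=\mathrm{i}\tilde{\omega}\tilde{\zeta}^{\pm1}_{\pm1}+\mathcal{O}(\epsilon)$ for the dominant oscillatory mode.

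With the coefficient functions in hand up to $|k|<N$, I would bound the remainders by inserting the truncated expansion into \eqref{ne exp integ one-stage} and measuring the resulting defect. By construction every un-cancelled term carries at least one factor $h^N$ from higher-order tails in $hD$ or from modes with $|k|=N$; propagating this defect over $n=t/h$ steps using the discrete stability of the scheme, together with the $h$-uniform bounds on operator inverses guaranteed by \eqref{numerical non-resonance cond}, yields $\tilde{R}_{h,N}=\mathcal{O}(t^2h^N)$ and $\tilde{S}_{h,N}=\mathcal{O}(t^2h^{N-1})$, the missing factor of $h$ in $\tilde{S}_{h,N}$ reflecting the order-one derivative relation between velocity and position. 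The reality identities $\tilde{\zeta}^{-k}_{-l}=\overline{\tilde{\zeta}^k_l}$ and $\tilde{\eta}^{-k}_{-l}=\overline{\tilde{\eta}^k_l}$ follow immediately from $x_n,v_n\in\RR^3$: realness forces $P\tilde{\zeta}^{-k}=\bar{P}\overline{\tilde{\zeta}^k}$, which combined with the stated form of $P^{\mathrm{H}}\bar{P}$ gives the index-flipping relation.

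The main obstacle will be the careful bookkeeping of the coupled $\epsilon$- and $h$-scalings throughout the iterative construction. One has to verify inductively that every product of previously determined $\tilde{\zeta}^{k'}$ arising from Taylor-expanding $\tilde{F}$ around $\tilde{\zeta}^0$ respects the scalings \eqref{coefficient func1}--\eqref{coefficient func2}, and that all derivatives of the right-hand sides remain at most $\mathcal{O}(\epsilon)$ so that the algebraic solutions actually produce slowly varying coefficient functions. The non-standard size $\mathcal{O}(\epsilon^3/\sqrt{h})$ and the factor $1/|\cos(h\tilde{\omega}/2)|$ are both fingerprints of delicate inversions at $|k|=1$ and must be tracked consistently throughout; they reappear as the analogous loss in the magnetic moment estimate of Theorem~\ref{2 sym Long-time thm}.
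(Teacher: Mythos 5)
Your overall route is the paper's route: insert the two-scale ansatz into the scheme \eqref{ne exp integ one-stage}, handle the interpolated argument through $L_5(hD,\tau,k)$ as in \eqref{xip}, eliminate to obtain a closed operator equation with symbol $L(hD+\mathrm{i}kh\tilde{\omega})$, solve case by case in $k$ using Proposition \ref{lhd pro} and the non-resonance condition \eqref{numerical non-resonance cond}, recover $\tilde{\eta}^k$ from the velocity relation, and bound the remainder by defect propagation. However, there is one genuine gap: you never fix the initial values of the modulation functions. In the ansatz \eqref{ansatz} the modes $\tilde{\zeta}^0_{\pm1}$, $\tilde{\zeta}^0_0$, $\tilde{\zeta}^1_1$, $\tilde{\zeta}^{-1}_{-1}$ are determined by \emph{differential} equations (first or second order in $D$), not algebraically, so the construction is not complete --- and in particular the function bounds \eqref{coefficient func1-1}, as opposed to the derivative bounds in \eqref{coefficient func1}, cannot be concluded --- until the free constants of integration are matched to the data via $\tilde{x}_h(0)=\tilde{x}_0$, $\tilde{v}_h(0)=\tilde{v}_0$. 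This is precisely the paper's ``Initial values'' step, which yields $\tilde{\zeta}_0^0(0)=\mathcal{O}(1)$, $\dot{\tilde{\zeta}}_0^0(0)=\mathcal{O}(1)$, $\tilde{\zeta}_{1}^{1}(0)=\frac{1}{\mathrm{i}\tilde{\omega}}\big(\tilde{v}_{1}^0-\dot{\tilde{\zeta}}_{1}^0(0)+\mathcal{O}(\epsilon)\big)=\mathcal{O}(\epsilon)$, $\tilde{\zeta}_{-1}^{-1}(0)=\mathcal{O}(\epsilon)$ and $\tilde{\zeta}_{\pm1}^0(0)=\mathcal{O}(1)$; your inductive scaling argument has nothing to start from without these, so you should add this step.

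A secondary inaccuracy concerns the source of the factor $1/\abs{\cos(\frac{1}{2}h\tilde{\omega})}$ in \eqref{coefficient func2} and of the identity $\tilde{\eta}^{\pm1}_{\pm1}=\mathrm{i}\tilde{\omega}\tilde{\zeta}^{\pm1}_{\pm1}+\mathcal{O}(\epsilon)$: you attribute both to inverting $L_5(hD,\frac{1}{2},k)$, but $L_5$ is never inverted anywhere in the construction (it only transports $\tilde{\zeta}^k$ to $\tilde{\xi}^k$). In the paper these facts come from the relation \eqref{MFE-33}, $\tilde{\eta}^k=(L_3L_4^{-1})(hD+\mathrm{i}kh\tilde{\omega})\tilde{\zeta}^k$, read off from the explicit entries in Proposition \ref{lhd pro}: the entries $\pm\frac{2}{h}\tan(\frac{h\tilde{\omega}}{2})$ produce the loss in $\tilde{\eta}^{\pm1}_{0}$, while for $\tilde{\eta}^{\pm1}_{\mp1}$ the loss only appears after combining the entry $\frac{\tilde{\omega}}{-1+h\tilde{\omega}\cot(\frac{h\tilde{\omega}}{2})}$ with the $\sin(h\tilde{\omega})$ denominator in the ansatz for $\tilde{\zeta}^{\pm1}_{\mp1}$, using $\sin(h\tilde{\omega})=2\sin(\frac{h\tilde{\omega}}{2})\cos(\frac{h\tilde{\omega}}{2})$. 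Your mechanism is equivalent to the paper's only for the zero component, where indeed $L_4(hD+\mathrm{i}kh\tilde{\omega})=h\,\mathrm{e}^{(hD+\mathrm{i}kh\tilde{\omega})/2}L_5(hD,\frac{1}{2},k)$ on that component; carried out literally for the other components, your bookkeeping would miss the $1/\cos$ loss in $\tilde{\eta}^{\pm1}_{\mp1}$, which is exactly the delicate tracking you identify as the main obstacle.
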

\begin{proof}
We will construct the functions
\begin{equation}
\begin{aligned} &\tilde{x}_{h}(t)= \sum\limits_{|k|<N} \mathrm{e}^{\mathrm{i}k\tilde{\omega}
t}\tilde{\zeta}^k(t),\quad
 \ \tilde{v}_{h}(t)= \sum\limits_{|k|<N} \mathrm{e}^{\mathrm{i}k\tilde{\omega}
 t}\tilde{\eta}^k(t)
\end{aligned}
\label{MFE-1}%
\end{equation} with smooth coefficient functions and prove that there is
only a small defect when $\tilde{x}_{h}$ and $\tilde{v}_{h}$ are
inserted into the numerical scheme \eqref{ne exp integ one-stage}.

\vskip1mm  $\bullet$ \textbf{Construction of the coefficients
functions.}
 For the   term $(1-\tau)\tilde{x}_{n}+\tau
\tilde{x}_{n+1}$, we consider the  function
\begin{equation*}
\begin{aligned} &\tilde{q}_{h}(t+\frac{h}{2},\tau)=\sum\limits_{|k|<N}
\mathrm{e}^{\mathrm{i}k\tilde{\omega} (t+\frac{h}{2})}\tilde{\xi}
^k(t+\frac{h}{2},\tau)
\end{aligned}
\end{equation*}
as its modulated Fourier expansion. Then in the light of
\eqref{MFE-1}, we get
\begin{equation*}
\begin{aligned} \tilde{q}_{h}(t+\frac{h}{2},\tau)&=(1-\tau)\sum\limits_{|k|<N} \mathrm{e}^{\mathrm{i}k\tilde{\omega}
t}\tilde{\zeta}^k(t)+\tau \sum\limits_{|k|<N}
\mathrm{e}^{\mathrm{i}k\tilde{\omega} (t+h)}\tilde{\zeta}^k(t+h)\\
&=\sum\limits_{|k|<N} \mathrm{e}^{\mathrm{i}k\tilde{\omega}
(t+\frac{h}{2})}\Big((1-\tau)\mathrm{e}^{-\mathrm{i}k\tilde{\omega}
 \frac{h}{2}}\mathrm{e}^{-
 \frac{h}{2}D} +\tau\mathrm{e}^{\mathrm{i}k\tilde{\omega}
 \frac{h}{2}} \mathrm{e}^{
 \frac{h}{2}D}\Big)\tilde{\zeta}^k(t+\frac{h}{2}),
\end{aligned}
\end{equation*}
which yields
\begin{equation}\label{xip}
\begin{aligned} \tilde{\xi} ^k(t+\frac{h}{2},\tau)=\Big((1-\tau)\mathrm{e}^{-\mathrm{i}k\tilde{\omega}
 \frac{h}{2}}\mathrm{e}^{-
 \frac{h}{2}D} +\tau\mathrm{e}^{\mathrm{i}k\tilde{\omega}
 \frac{h}{2}} \mathrm{e}^{
 \frac{h}{2}D}\Big)\tilde{\zeta}^k(t+\frac{h}{2})=L_5(hD,\tau,k)\tilde{\zeta}^k(t+\frac{h}{2}).
\end{aligned}
\end{equation}

By   eliminating $\mathcal{I}$ in \eqref{ne exp integ one-stage},
one gets
\begin{equation}\label{integ one-stage}
\begin{array}[c]{ll}
\varphi_1(\mathrm{i}h\tilde{\Omega})(\tilde{x}_{n+1}-\tilde{x}_n)=h\varphi_2(\mathrm{i}h\tilde{\Omega})
\tilde{v}_{n+1}+h\big(\varphi^2_1(\mathrm{i}h\tilde{\Omega})-e^{
\mathrm{i}h\tilde{\Omega}}\varphi_2(\mathrm{i}h\tilde{\Omega})\big)\tilde{v}_{n}.
\end{array}
\end{equation}
Inserting  \eqref{MFE-1}   into   \eqref{integ one-stage} and
comparing the coefficients of $\mathrm{e}^{\mathrm{i}k\tilde{\omega}
t}$ yields
\begin{equation}
\begin{aligned} &
\tilde{\eta}^k(t)=(L_3L_4^{-1})(hD+\mathrm{i}kh\tilde{\omega})\tilde{\zeta}^k(t).
\end{aligned}
\label{MFE-33}%
\end{equation}
On the basis of this formula, we can obtain
\begin{equation}
\begin{aligned} &\tilde{\eta}^0(t)=  \dot{\tilde{\zeta}}^0(t)+\mathcal{O}(h),\\
 &\tilde{\eta}_1^1(t)= \mathrm{i}\tilde{w}
\tilde{\zeta}_1^1(t)+\mathcal{O}(h),\ \tilde{\eta}_{-1}^{-1}(t)=
\mathrm{i}\tilde{w} \tilde{\zeta}_{-1}^{-1}(t)+\mathcal{O}(h),
\end{aligned}
\label{MFE-4}%
\end{equation}
which   will be used in the analysis of the next subsection.

 Based on the second formula of  \eqref{ne exp integ one-stage} and \eqref{MFE-33}, one has
\begin{equation*}
\left\{\begin{array}[c]{ll} &L(hD)\tilde{\zeta} ^0(t)=h
\int_{0}^{1}\Big(F(\tilde{\xi} ^0(t,\tau))+
\sum\limits_{s(\alpha)=0}\frac{1}{m!}F^{(m)}(\tilde{\xi}^0(t,\tau))(\tilde{\xi}(t,\tau))^{\alpha}\Big)d\tau,\\
&L(hD+\mathrm{i}kh\omega)\tilde{\zeta} ^k(t)=h \int_{0}^{1}
\sum\limits_{s(\alpha)=k}\frac{1}{m!}F^{(m)}(\tilde{\xi}^0(t,\tau))(\tilde{\xi}(t,\tau))^{\alpha}d\tau,\qquad k\neq0,\\
\end{array}\right.
\end{equation*}
where the sum ranges over $m\geq0$,
$\alpha=(\alpha_1,\ldots,\alpha_m)$ with integer $\alpha_i$
satisfying $0<|\alpha_i|<N$,
$s(\alpha)=\sum\limits_{j=1}^{m}\alpha_j,$ and
$(\tilde{\xi}(t,\tau))^{\alpha}$ is an abbreviation for
$(\tilde{\xi}^{\alpha_1}(t,\tau),\ldots,\tilde{\xi}^{\alpha_m}(t,\tau))$.
This formula as well as \eqref{xip}  presents the modulation system
for the coefficients $\tilde{\zeta} ^k(t)$ of the modulated Fourier
expansion $q_n$. By using the results given in Proposition \ref{lhd
pro} and choosing  the dominate terms in the relations yields the
ansatz of $\tilde{\zeta}^k(t)$:
\begin{equation}\label{ansatz}%
\begin{array}{ll}
\dot{\tilde{\zeta}}^0_{\pm1}(t)=\frac{h}{\mp
 \mathrm{i}h\tilde{\omega}}\big(G_{\pm10}(\cdot)+\cdots\big),\
&\ddot{\tilde{\zeta}}^0_{0}(t)= G_{00}(\cdot)+\cdots,\\
\tilde{\zeta}_{-1}^1(t)=\frac{h\tilde{\omega}\cos(\frac{h\tilde{\omega}}{2})-\sin(\frac{h\tilde{\omega}}{2})}{-\tilde{\omega}^2\sin(h\tilde{\omega})}
\big(F^1_{-10}(\cdot)+\cdots\big),\
&\tilde{\zeta}_{0}^1(t)=\frac{\textmd{sinc}(\frac{h\tilde{\omega}}{2})}{\tilde{\omega}}
\big(F^1_{00}(\cdot)+\cdots\big),
\\
\dot{\tilde{\zeta}}_{1}^1(t)=\frac{\frac{1}{2}h}{\mathrm{i}\tan(\frac{1}{2}h\tilde{\omega})}
\big(F^1_{10}(\cdot)+\cdots\big),\
&\dot{\tilde{\zeta}}_{-1}^{-1}(t)=\frac{\frac{1}{2}h}{\mathrm{i}\tan(\frac{1}{2}h\tilde{\omega})}
\big(F^{-1}_{-10}(\cdot)+\cdots\big),\\
\tilde{\zeta}_{0}^{-1}(t)=\frac{\textmd{sinc}(\frac{h\tilde{\omega}}{2})}{\tilde{\omega}}
\big(F^{-1}_{00}(\cdot)+\cdots\big), \
&\tilde{\zeta}_{1}^{-1}(t)=\frac{h\tilde{\omega}\cos(\frac{h\tilde{\omega}}{2})-\sin(\frac{h\tilde{\omega}}{2})}{-\tilde{\omega}^2\sin(h\tilde{\omega})}
 \big(F^{-1}_{10}(\cdot)+\cdots\big),\\
\tilde{\zeta}^{k}(t)= \frac{h \Gamma_2}{-  8h\tilde{\Omega}^2
\sin(\frac{hk\omega I}{2}) \sin(\frac{-h\tilde{\Omega}+hk\omega
I}{2})\Gamma_1} \big(F_0^{k}(\cdot)+\cdots\big) &\textmd{for}\
\abs{k}>1.
\end{array} %
\end{equation}
 Similarly, the ansatz
of the modulated Fourier functions $\tilde{\eta}^k(t)$ can be
obtained by considering  the dominating terms of
  \eqref{MFE-33} and \eqref{ansatz}.


  \vskip1mm  $\bullet$
\textbf{Initial values.}

According to the conditions $x_{h}(0)=\tilde{x}_0$ and
 $v_{h}(0)=\tilde{v}_0$,
we obtain
\begin{equation}\label{Initial values-1}%
\begin{aligned}
&\tilde{x}_0^0=\tilde{\zeta}_0^0(0)+\mathcal{O}(\epsilon),\\
&\tilde{x}_{\pm1}^0=\tilde{\zeta}_{\pm1}^0(0)+\tilde{\zeta}_{\pm1}^1(0)+\mathcal{O}(\epsilon^2),\\
&\tilde{v}_0^0=\tilde{\eta}_0^0(0)+\mathcal{O}(h)=\dot{\tilde{\zeta}}_0^0(0)+\mathcal{O}(\epsilon),\\
&\tilde{v}_{1}^0=\tilde{\eta}_{1}^0(0)+\tilde{\eta}_{1}^{1}(0)+\mathcal{O}(\frac{\epsilon^2}{\sqrt{h}})=
\dot{\tilde{\zeta}}_{1}^0(0)+\mathrm{i}\tilde{w}\tilde{\zeta}_{1}^{1}(0)+\mathcal{O}(\epsilon),\\
&\tilde{v}_{-1}^0=\tilde{\eta}_{-1}^0(0)+\tilde{\eta}_{-1}^{-1}(0)+\mathcal{O}(\frac{\epsilon^2}{\sqrt{h}})=
\dot{\tilde{\zeta}}_{-1}^0(0)+\mathrm{i}\tilde{w}\tilde{\zeta}_{-1}^{-1}(0)+\mathcal{O}(\epsilon). \end{aligned} %
\end{equation}
Thus  the initial values $\tilde{\zeta}_0^0(0)=\mathcal{O}(1)$ and
$\dot{\tilde{\zeta}}_0^0(0)=\mathcal{O}(1)$ can be arrived by
considering the first and third formulae. It follows from  the
fourth formula that $\tilde{\zeta}_{1}^{1}(0)=\frac{1}{\mathrm{i}
\tilde{\omega}}\big(\tilde{v}_{1}^0-\dot{\tilde{\zeta}}_{1}^0(0)+\mathcal{O}(\epsilon)\big)
=\mathcal{O}(\epsilon)$ and similarly one has
$\tilde{\zeta}_{-1}^{-1}(0)=\mathcal{O}(\epsilon)$. Then  one gets
the initial value $\tilde{\zeta}_{\pm1}^0(0)=\mathcal{O}(1)$  in the
light of
  \eqref{Initial values-1}.

\vskip1mm  $\bullet$ \textbf{Bounds of the coefficients functions.}

The bounds \eqref{coefficient func1} can be obtained by considering
the initial values obtained above, the ansatz
  and Assumption \ref{ass}. On the basis of the initial values obtained above, we  get the bounds \eqref{coefficient func1-1}. The bounds given
in \eqref{coefficient func2} are true in the light of \eqref{MFE-4}.

\vskip1mm  $\bullet$ \textbf{Defect.}

The defect \eqref{remainder} can be obtained   by using the
Lipschitz continuous of the nonlinearity  and  the standard
convergence estimates.

\end{proof}

\begin{theo}\label{energy thm2}
The numerical solution given by the  EEP integrator \eqref{EAVF} for
solving  \eqref{charged-sts-first order} admits the following
modulated Fourier expansion
\begin{equation}
\begin{aligned} &x_{n}=  \sum\limits_{|k|<N} \mathrm{e}^{\mathrm{i}k\tilde{\omega} t}\zeta^k(t)+R_{h,N}(t),\
\ v_{n}= \sum\limits_{|k|<N} \mathrm{e}^{\mathrm{i}k\tilde{\omega}
t}\eta^k(t)+S_{h,N}(t),
\end{aligned}
\label{MFE-ERKN}%
\end{equation}
where  $\zeta^k(t)=P\tilde{\zeta}^k(t)$ and
$\eta^k(t)=P\tilde{\eta}^k(t).$ This relation yields
$\zeta^{-k}=\overline{\zeta^{k}}$ and
 $\eta^{-k}=\overline{\eta^{k}}$.  The bounds of the remainders $R_{h,N}$ and $S_{h,N}$  are the same as
 those given in Theorem \ref{energy thm}.
\end{theo}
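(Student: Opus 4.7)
My plan is to obtain Theorem \ref{energy thm2} directly from Theorem \ref{energy thm} by applying the unitary change of variable \eqref{change of variable} in reverse, together with the reality constraint imposed by $x_n, v_n \in \RR^3$.

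First I would recall that $\tilde{x}_n = P^\textup{H} x_n$ and $\tilde{v}_n = P^\textup{H} v_n$, so multiplying both expansions in \eqref{MFE-ERKN} from Theorem \ref{energy thm} on the left by $P$ yields
\begin{equation*}
x_n = P \tilde{x}_n = \sum_{|k|<N} \mathrm{e}^{\mathrm{i}k\tilde{\omega} t}\, P \tilde{\zeta}^k(t) + P \tilde{R}_{h,N}(t),
\end{equation*}
and similarly for $v_n$. Setting $\zeta^k(t) := P \tilde{\zeta}^k(t)$, $\eta^k(t) := P \tilde{\eta}^k(t)$, $R_{h,N}(t) := P \tilde{R}_{h,N}(t)$, $S_{h,N}(t) := P \tilde{S}_{h,N}(t)$ gives the claimed expansion. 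Since $P$ is unitary, $\|P\|_2 = 1$, so the remainder bounds $R_{h,N}(t) = \mathcal{O}(t^2 h^N)$ and $S_{h,N}(t) = \mathcal{O}(t^2 h^{N-1})$ follow immediately from the corresponding bounds on $\tilde{R}_{h,N}$ and $\tilde{S}_{h,N}$.

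Next I would verify the conjugacy relations $\zeta^{-k} = \overline{\zeta^k}$ and $\eta^{-k} = \overline{\eta^k}$. The key observation is the identity $P\tilde{\zeta}^{-k} = \bar{P}\,\overline{\tilde{\zeta}^{k}}$ already noted in Theorem \ref{energy thm} (which itself reflects the fact that $x_n$ is real, forcing the $k$ and $-k$ modes to be complex conjugates after returning to physical coordinates). From this identity and the elementary fact $\overline{P \tilde{\zeta}^k} = \bar{P}\,\overline{\tilde{\zeta}^k}$, I would deduce
\begin{equation*}
\zeta^{-k} = P \tilde{\zeta}^{-k} = \bar{P}\,\overline{\tilde{\zeta}^{k}} = \overline{P \tilde{\zeta}^k} = \overline{\zeta^k},
\end{equation*}
and the same argument applied to $\tilde{\eta}^k$ gives $\eta^{-k} = \overline{\eta^k}$.

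There is no real obstacle here; the statement is essentially a corollary obtained by transporting the modulated Fourier expansion through the unitary change of variables. The only mildly delicate point is keeping straight which conjugation relations come from the structure of $P$ (inherited from the diagonalization $\tilde{B} = P \Lambda P^\textup{H}$ with $\Lambda$ purely imaginary, so that complex conjugation pairs the $\pm |\tilde{B}|\mathrm{i}$ eigenvectors) versus from the reality of the original unknowns $x_n$, $v_n$. Everything else, including the bounds on the coefficient functions in original coordinates, is inherited from Theorem \ref{energy thm} via the uniform bound $\|P\|_2 = 1$.
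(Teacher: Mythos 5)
Your proposal is correct and coincides with the paper's (implicit) argument: the paper states Theorem \ref{energy thm2} without proof, treating it as an immediate consequence of Theorem \ref{energy thm} under the unitary transformation \eqref{change of variable}, which is exactly what you carry out. Your verification of the conjugation relations via $P\tilde{\zeta}^{-k}=\bar{P}\,\overline{\tilde{\zeta}^{k}}$ and the unitarity argument for the remainder bounds supply precisely the routine details the paper omits.
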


\subsection{An almost-invariant} \label{subs: ai}
With the coefficient functions constructed in last subsection, we
let
$$\vec{\tilde{\zeta}}=\big(\tilde{\zeta}^{-N+1}(t),\cdots,\tilde{\zeta}^{-1}(t),
\tilde{\zeta}^{0}(t),\tilde{\zeta}^{1}(t),\cdots,\tilde{\zeta}^{N-1}(t)\big)$$
and
$$\vec{\tilde{\eta}}=\big(\tilde{\eta}^{-N+1}(t),\cdots,\tilde{\eta}^{-1}(t),
\tilde{\eta}^{0}(t),\tilde{\eta}^{1}(t),\cdots,\tilde{\eta}^{N-1}(t)\big).
$$
An almost-invariant of the modulated Fourier expansion
\eqref{MFE-ERKN} is given as follows.

\begin{theo}\label{2 invariant thm}
Under the conditions of Theorem \ref{energy thm}, there exists a
function
$\widehat{\mathcal{M}}[\vec{\tilde{\zeta}},\vec{\tilde{\eta}}]$ such
that
\begin{equation*}
\widehat{\mathcal{M}}[\vec{\tilde{\zeta}},\vec{\tilde{\eta}}](t)=\widehat{\mathcal{M}}[\vec{\tilde{\zeta}},\vec{\tilde{\eta}}](0)+\mathcal{O}(th^{N})
\end{equation*}
for $0\leq t\leq T,$ where
\begin{equation}\begin{aligned}
\widehat{\mathcal{M}}[\vec{\tilde{\zeta}},\vec{\tilde{\eta}}]&=\frac{1}{2}\frac{\frac{1}{2}
h\tilde{\omega}^3\cos(\frac{1}{2} h\tilde{\omega})}{\sin(\frac{1}{2}
h\tilde{\omega})} \big(\abs{ \tilde{\zeta}_1^1}^2+\abs{
\tilde{\zeta}_{-1}^{-1}}^2\big)+\mathcal{O}(h)\\
&=\frac{1}{2}\frac{\frac{1}{2} h\tilde{\omega}\cos(\frac{1}{2}
h\tilde{\omega})}{\sin(\frac{1}{2} h\tilde{\omega})} \big(\abs{
\tilde{\eta}_1^1}^2+\abs{
\tilde{\eta}_{-1}^{-1}}^2\big)+\mathcal{O}(h).
\label{mm invariant}%
\end{aligned}
\end{equation}
\end{theo}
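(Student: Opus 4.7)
The plan is to obtain $\widehat{\mathcal M}$ as the Noether invariant associated with a $U(1)$-type symmetry of the modulation system, following the standard strategy for modulated Fourier expansions (cf.~\cite{hairer2006}, Chapter XIII).

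First I would cast the modulation equations derived in the proof of Theorem~\ref{energy thm} in variational form. Inserting \eqref{MFE-1} into the second formula of \eqref{ne exp integ one-stage} and separating Fourier modes gives a system of the shape
\[ \mathcal A^k(hD)\,\tilde\zeta^k(t) \;=\; -\nabla_{\overline{\tilde\zeta^k}}\,\mathcal U[\vec{\tilde\zeta}](t), \qquad |k|<N, \]
where $\mathcal A^k$ is built from $L(hD+\mathrm ikh\tilde\omega)$ and $L_5$ via Proposition~\ref{lhd pro}, and $\mathcal U$ is the $\tau$-averaged formal expansion of $U(P\,\cdot\,)$ on $q_h$ in powers of $\mathrm e^{\mathrm ik\tilde\omega t}$. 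Because each monomial in $\mathcal U$ is a product $\tilde\zeta^{\alpha_1}\cdots\tilde\zeta^{\alpha_m}$ paired with $\tilde\zeta^{-k}$ and satisfies the index-conservation $\sum_j\alpha_j=k$, the functional $\mathcal U$ is invariant under the one-parameter group $\tilde\zeta^k\mapsto\mathrm e^{\mathrm ik\theta}\tilde\zeta^k$.

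Second I would apply Noether's argument. Differentiating the invariance at $\theta=0$ yields $\sum_k\mathrm ik\,\tilde\zeta^k\cdot\nabla_{\tilde\zeta^k}\mathcal U=0$; multiplying each modulation equation by $-\mathrm ik\,\overline{\tilde\zeta^k}$, summing over $|k|<N$, and using the reality relation $\tilde\zeta^{-k}_{-l}=\overline{\tilde\zeta^k_l}$ from Theorem~\ref{energy thm} cancels the nonlinear part modulo the truncation defect. The quadratic operator expression $\sum_k\mathrm ik\,\overline{\tilde\zeta^k}\,\mathcal A^k(hD)\tilde\zeta^k$ that remains can then be rewritten as a total $t$-derivative $\tfrac{d}{dt}\widehat{\mathcal M}[\vec{\tilde\zeta},\vec{\tilde\eta}](t)$ by a formal integration by parts in the series, using the operator symmetry $\overline{\mathcal A^k(hD)}=\mathcal A^{-k}(-hD)$ that follows from the skew-symmetry of $\tilde B$ and the symmetry of the EEP method (Proposition~\ref{symmetric thm}). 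Integration on $[0,t]$ gives the stated bound $\widehat{\mathcal M}(t)-\widehat{\mathcal M}(0)=\mathcal O(th^N)$.

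Third I would extract the explicit leading term in \eqref{mm invariant}. The factor $\mathrm ik$ already removes the $k=0$ contribution, and by the size estimates \eqref{coefficient func1}--\eqref{coefficient func1-1} only the diagonal entries $\tilde\zeta^1_1,\tilde\zeta^{-1}_{-1}$ survive at order one, since every other component for $|k|\le1$ carries an extra factor of $\epsilon$ or $\sqrt h$. Using the dominant diagonal entry of $(L_3L_4^{-1})(hD\pm\mathrm ih\tilde\omega)$ at $D=0$ from Proposition~\ref{lhd pro}, namely $\mathrm i\tilde\omega/\bigl(-1+h\tilde\omega\cot(h\tilde\omega/2)\bigr)$, and simplifying with $\mathrm ik\,\overline{\tilde\zeta^k_l}\,\mathcal A^k_l(0)\tilde\zeta^k_l$ for $k=l=\pm1$, the trigonometric prefactor collapses to $\tfrac12 h\tilde\omega^3\cos(h\tilde\omega/2)/\sin(h\tilde\omega/2)$, giving the first line of \eqref{mm invariant}. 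The second line then follows from \eqref{MFE-4}, since $\tilde\eta^{\pm1}_{\pm1}=\mathrm i\tilde\omega\,\tilde\zeta^{\pm1}_{\pm1}+\mathcal O(\epsilon)$ yields $|\tilde\eta^{\pm1}_{\pm1}|^2=\tilde\omega^2|\tilde\zeta^{\pm1}_{\pm1}|^2+\mathcal O(h)$.

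The main obstacle is the second step: verifying that the operator-valued bilinear form $\sum_k\mathrm ik\,\overline{\tilde\zeta^k}\,\mathcal A^k(hD)\tilde\zeta^k$ is genuinely a total $t$-derivative modulo $\mathcal O(h^N)$, rather than a sum that leaves residual oscillatory terms of larger size. This rests on the symmetry $\overline{\mathcal A^k(hD)}=\mathcal A^{-k}(-hD)$ plus a Taylor-series bookkeeping whose remainder must be controlled uniformly in $|k|\le N-1$; precisely here the non-resonance condition \eqref{numerical non-resonance cond} is used, since it guarantees invertibility of each $\mathcal A^k$ with the polynomial-in-$h$ norm bounds needed to absorb the higher-order defects into the stated $\mathcal O(th^N)$ error without contaminating the leading prefactor.
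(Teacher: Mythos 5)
Your proposal is essentially the paper's own argument: the paper likewise obtains the almost-invariant from the invariance of the modulation potential $\mathcal{U}$ under the phase rotation $q_h^k \mapsto \mathrm{e}^{\mathrm{i}k\lambda\tilde{\omega}}q_h^k$ (a Noether-type differentiation at $\lambda=0$, $\tau=\tfrac12$), substitutes the modulation equations to reduce the identity to the bilinear form $\sum_{|k|<N}\tfrac{\mathrm{i}k\tilde{\omega}}{-h}\bigl(L_5(hD,\tfrac12,-k)\overline{\tilde{\zeta}^{k}}\bigr)^{\intercal}L(hD+\mathrm{i}hk\tilde{\omega})\tilde{\zeta}^k$, identifies this as a total derivative via Proposition \ref{lhd pro} and the ``magic formulas'' of \cite{hairer2006}, and then extracts the dominant terms $|\tilde{\zeta}_{\pm1}^{\pm1}|^2$ from the coefficient bounds and converts to the $\tilde{\eta}$-form by \eqref{MFE-4}, exactly as you outline. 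The only blemish is in your leading-term bookkeeping: the trigonometric prefactor arises from the $\mathrm{i}D$-coefficient $\tfrac12 h^2\tilde{\omega}^2\csc(\tfrac12 h\tilde{\omega})$ of the relevant diagonal entry of $L(hD\pm\mathrm{i}h\tilde{\omega})$ paired with the constant term $\cos(\tfrac12 h\tilde{\omega})$ of $L_5(hD,\tfrac12,\mp1)$ (the constant term of that entry of $L$ vanishes, which is precisely why the expression is a total time-derivative), not from the entry $\mathrm{i}\tilde{\omega}/\bigl(-1+h\tilde{\omega}\cot(\tfrac12 h\tilde{\omega})\bigr)$ of $(L_3L_4^{-1})$ that you cite.
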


\begin{proof}
With  Theorem \ref{energy thm}, we obtain that
\begin{equation*}
\begin{aligned}
& L(hD) \tilde{x}_{h}(t)=h \int_{0}^{1}
\tilde{F}(\tilde{q}_{h}(t,\tau))d\tau+\mathcal{O}(h^{N+2}),
\end{aligned}
\end{equation*}
where we have used  the   denotations
$\tilde{x}_{h}(t)=\sum\limits_{ |k|<N}\tilde{x}^k_{h}(t),\
\tilde{q}_{h}(t,\tau)=\sum\limits_{ |k|<N}\tilde{q}^k_{h}(t,\tau)$
with  $ \tilde{x}^k_{h}(t)=\mathrm{e}^{\mathrm{i}k\tilde{\omega}
t}\tilde{\zeta}^k(t)$ and $
\tilde{q}^k_{h}(t,\tau)=\mathrm{e}^{\mathrm{i}k\tilde{\omega}
t}\tilde{\xi}^k(t,\tau).$ Multiplication of this result with $P$
yields
\begin{equation*}
\begin{aligned}
& PL(hD)P^\textup{H} P\tilde{x}_{h}(t)=PL(hD)P^\textup{H}
x_{h}(t)\\
&=h\int_{0}^{1}P \tilde{F}(\tilde{q}_{h}(t,\tau))d\tau
 +\mathcal{O}(h^{N+2})=h\int_{0}^{1}F(q_{h}(t,\tau))d\tau+\mathcal{O}(h^{N+2}),
\end{aligned}
\end{equation*}
where   $x_{h}(t)=\sum\limits_{ |k|<N}x^k_{h}(t)$  with
$x^k_{h}(t)=\mathrm{e}^{\mathrm{i}k\tilde{\omega} t}\zeta^k(t) $ and
$q_{h}(t,\tau)=\sum\limits_{ |k|<N}q_{h}^k(t,\tau)$  with
$q_{h}^k(t,\tau)=\mathrm{e}^{\mathrm{i}k\tilde{\omega}
t}\xi^k(t,\tau).$   Rewrite the equation in terms of $x^k_{h}$ and
then we get
\begin{equation*}
\begin{aligned} &PL(hD)P^\textup{H} x^k_{h}(t)=-h\nabla_{x^{-k}}\mathcal{U}(\vec{q}(t,\tau))+\mathcal{O}(h^{N+2}),
\end{aligned}
\end{equation*}
where $\mathcal{U}(\vec{q}(t,\tau))$ is defined as
\begin{equation}
\begin{aligned}
&\mathcal{U}(\vec{q}(t,\tau))=\int_{0}^{1}U(q_h^0(t,\tau))d\tau+
\sum\limits_{s(\alpha)=0}\int_{0}^{1}\frac{1}{m!}U^{(m)}(q_h^0(t,\tau))
(q_h(t,\tau))^{\alpha}d\tau
\end{aligned}
\label{newuu}%
\end{equation}
with
$$\vec{q}(t,\tau)=\big(q_h^{-N+1}(t,\tau),\cdots,q_h^{-1}(t,\tau),
q_h^{0}(t,\tau),q_h^{1}(t,\tau),\cdots,q_h^{N-1}(t,\tau)\big).$$
Define the  vector function
$$\vec{q}(\lambda,t,\tau)=\big( \mathrm{e}^{\mathrm{i}(-N+1)\lambda
\tilde{\omega}}q^{-N+1}_h(t,\tau),\cdots,
q^{0}_h(t,\tau),\cdots,\mathrm{e}^{\mathrm{i}(N-1)\lambda
\tilde{\omega}}q^{N-1}_h(t,\tau)\big).$$   We have the invariance
property
 that $\mathcal{U}( \vec{q}(\lambda,t,\tau))$ is  independent of
$\lambda$ and $\tau$. Thus, differentiation with respect $\lambda$
implies
\begin{equation*}
\begin{aligned}0=& \frac{\partial}{\partial\lambda}  \mathcal{U}(
\vec{q}(\lambda,t,\tau))  =\Big(\frac{\partial}{\partial q}
 \mathcal{U}( \vec{q}(\lambda,t,\tau)) \Big)^\intercal
\frac{\partial}{\partial\lambda }\vec{q}(\lambda,t,\tau)\\ =&
\sum\limits_{|k|<N}\mathrm{i}k\tilde{\omega}\mathrm{e}^{\mathrm{i}k\lambda
\tilde{\omega}} (q^{k}_h(\lambda,t,\tau))^\intercal\nabla_{k}
  \mathcal{U}( \vec{q}(\lambda,t,\tau)).\end{aligned}
\end{equation*}
By letting $\lambda=0$ and $\tau=\frac{1}{2}$, we obtain $
\sum\limits_{|k|<N}\mathrm{i}k\tilde{\omega}
(q^{k}_h(t,\frac{1}{2}))^\intercal \nabla_{ k} \mathcal{U}(
\vec{q}(t,\frac{1}{2}))=0. $ Consequently, we have
\begin{equation}
\begin{aligned}
0=&\sum\limits_{|k|<N}\mathrm{i}k\tilde{\omega}
(q^{-k}_h(t,\frac{1}{2}))^\intercal \nabla_{ -k}
\mathcal{U}( \vec{q}(t,\frac{1}{2}))\\
=& \sum\limits_{|k|<N}\mathrm{i}k\tilde{\omega}
(q^{-k}_h(t,\frac{1}{2}))^\intercal\frac{1}{-h }
PL(hD)P^\textup{H} x^k_{h}(t)+\mathcal{O}(h^{N})\\
=& \sum\limits_{|k|<N}\mathrm{i}k\tilde{\omega}
(\bar{q^{k}_h}(t,\frac{1}{2}))^\intercal\frac{1}{-h }
PL(hD)P^\textup{H} x^k_{h}(t)+\mathcal{O}(h^{N})\\
= & \sum\limits_{|k|<N}
 \frac{\mathrm{i}k\tilde{\omega}}{-h}\big(\bar{\xi}^k(t,\frac{1}{2})\big)^\intercal PL(hD+\mathrm{i}hk\tilde{\omega})P^\textup{H}
 \zeta^k(t)+\mathcal{O}(h^{N}).
\end{aligned}
\end{equation}
According to the relation of the coefficient functions  given in
Theorem \ref{energy thm2}, we obtain
\begin{equation}
\begin{aligned}
\mathcal{O}(h^{N})
 = & \sum\limits_{|k|<N}
 \frac{\mathrm{i}k\tilde{\omega}}{-h}\big(\overline{\tilde{\xi}^{k}}(t,\frac{1}{2})\big)^\intercal P^\textup{H} P
 L(hD+\mathrm{i}hk\tilde{\omega})P^\textup{H} P
\tilde{\zeta}^k(t)\\
 = & \sum\limits_{|k|<N}
 \frac{\mathrm{i}k\tilde{\omega}}{-h}\big(\overline{\tilde{\xi}^{k}}(t,\frac{1}{2})\big)^\intercal
 L(hD+\mathrm{i}hk\tilde{\omega})
\tilde{\zeta}^k(t)\\
 = & \sum\limits_{|k|<N}
 \frac{\mathrm{i}k\tilde{\omega}}{-h}\big(L_5(hD,\frac{1}{2},-k)\overline{\tilde{\zeta}^{k}}(t,\frac{1}{2})\big)^\intercal
 L(hD+\mathrm{i}hk\tilde{\omega})
\tilde{\zeta}^k(t).
\end{aligned}\label{duu-I}
\end{equation}
By Proposition \ref{lhd pro}    and   the following  the ``magic
formulas" on p. 508 of \cite{hairer2006},
 it can be verified that the right-hand side of  \eqref{duu-I}  is a total
derivative. This proves that there exists a function
$\widehat{\mathcal{M}}$ such
 that
$\frac{d}{dt}\widehat{\mathcal{M}}[\vec{\tilde{\zeta}},\vec{\tilde{\eta}}]=\mathcal{O}(h^{N})$.
  The first statement follows by integration this result.

Based on the bounds of the coefficients functions given in Theorem
\ref{energy thm}, the   $\widehat{\mathcal{H}}$ can be expressed as
\begin{equation*}
\begin{aligned}\widehat{\mathcal{M}}[\vec{\tilde{\zeta}},\vec{\tilde{\eta}}]=&\frac{1}{2}\frac{\frac{1}{2} h\tilde{\omega}^3\cos(\frac{1}{2}
h\tilde{\omega})}{\sin(\frac{1}{2} h\tilde{\omega})} \big(\abs{
\tilde{\zeta}_1^1}^2+\abs{
\tilde{\zeta}_{-1}^{-1}}^2\big)+\mathcal{O}(h^2)\\
=&\frac{1}{2}\frac{\frac{1}{2} h\tilde{\omega}\cos(\frac{1}{2}
h\tilde{\omega})}{\sin(\frac{1}{2} h\tilde{\omega})} \big(\abs{
\tilde{\eta}_1^1}^2+\abs{
\tilde{\eta}_{-1}^{-1}}^2\big)+\mathcal{O}(h),
\end{aligned}
\end{equation*}
where the second formula of \eqref{MFE-4} was used.   This implies
the second statement of the theorem.

\end{proof}

\subsection{Long time  magnetic moment conservation}\label{sec:Long-time near-conservation}
From  the bounds presented in Theorem \ref{energy thm} and from the
expression \eqref{newmomentum for B} it follows that
 \begin{equation*}
 \begin{aligned} I(x^n,v^n)= \tilde{I}(\tilde{x}^n,\tilde{v}^n)&= \frac{1}{2 \abs{\tilde{B}}}  \big(\abs{
\tilde{\eta}_1^1}^2+\abs{ \tilde{\eta}_{-1}^{-1}}^2\big)
+\mathcal{O}(h).
\end{aligned}\end{equation*}
Looking closing at this formula and \eqref{mm invariant}, we get the
following   relationship between  the magnetic moment and  the
  almost-invariant $\widehat{\mathcal{M}}$:
\begin{equation*}  \tilde{I}(\tilde{x}^n,\tilde{v}^n)=\frac{1}{ \abs{ \tilde{B}}}\frac{\tan(\frac{1}{2}
h\tilde{\omega})}{\frac{1}{2}
h\tilde{\omega}}\widehat{\mathcal{M}}[\vec{\tilde{\zeta}},\vec{\tilde{\eta}}]
+\mathcal{O}\Big(\frac{1}{\abs{\cos(\frac{1}{2}
h\tilde{\omega})}}h\Big)+\mathcal{O}(h).
 \end{equation*}

  Based on the above analysis and following the
way used in Chapter XIII of \cite{hairer2006}, Theorem \ref{2 sym
Long-time thm} is easily proved by   patching together the local
near-conservation result.

\section{Conclusions} \label{sec:conclusions}
In this paper, we have studied exponential  energy-preserving
methods   for charged-particle dynamics in a  strong and constant
magnetic field. It was shown that this method can exactly preserve
the energy of the charged-particle dynamics. It is worth mentioning
that the long-time magnetic moment conservation was also been
researched by deriving  a modulated Fourier expansion of the method
and showing  an almost-invariant of the modulation system.  In this
paper, we have also studied other properties of the method. The
effectiveness of the method  is emphasized by carrying out a
numerical experiment.

\section*{Acknowledgements}
The author is grateful to Professor Christian Lubich for drawing my
attention to the charged-particle dynamics and the long-term
analysis of energy-preserving methods.

\end{document}